\documentclass[11pt]{article}
\usepackage[margin=1in]{geometry}
\usepackage{amsmath,amsthm,amssymb,mathrsfs,graphicx}
\usepackage{amsfonts}
\usepackage{enumitem}
\usepackage{hyperref}
\hypersetup{colorlinks,  linkcolor={blue}, citecolor={blue}, urlcolor={blue}}

\allowdisplaybreaks[4] 
\numberwithin{equation}{section}

\usepackage{authblk}

\newtheorem{theorem}{Theorem}[section]

\newtheorem{lemma}[theorem]{Lemma}
\newtheorem{corollary}[theorem]{Corollary}

\newtheorem{remark}[theorem]{Remark}

\newtheorem{claim}[theorem]{Claim}
\newtheorem{conjecture}[theorem]{Conjecture}

\def\RR{\mathbb{R}}

\def\A{\mathcal{A}}
\def\B{\mathcal{B}}
\def\F{\mathcal{F}}
\def\G{\mathcal{G}}

\def\S{\mathcal{S}}

\def\v{\mathbf{v}}

\def\f{\mathbf{f}}

\def\1{\mathbf{1}}

\begin{document}

\title{An Erd\H{o}s--Ko--Rado theorem for cross-intersecting families in the Euclidean inner product}

\author{
Jiang-Chao Wan\footnote{School of Mathematics and Statistics, Hefei University, Hefei 230601, Anhui, P.R. China.
E-mail: \texttt{wanjc@stu.ahu.edu.cn}. Supported by Talent Research Fund Project of Hefei University (25RC02).}, ~~~
Yi Wang\footnote{School of Mathematical Sciences,
Anhui University, Hefei 230601, P.R. China.
E-mail: \texttt{wangy@ahu.edu.cn}.
Supported by National Natural Science Foundation of China (12571360, 12331012),
Excellent University Research and Innovation Team in Anhui Province (2024AH010002, 2025AHGXZK10041).}
}


\date{\today}

\maketitle

\begin{abstract}
Let $\binom{[n]}{k}$ be the set of all $k$-element subsets of the set $\{1,\ldots,n\}$ and let $\A,\B \subseteq \binom{[n]}{k}$ be two cross-intersecting families, that is, $A\cap B\neq \emptyset$ for any $A\in \A$ and $B\in \B$.
The classical cross-intersecting version of the Erd\H{o}s--Ko--Rado theorem,
due to Pyber~\cite{Pyber} and Matsumoto--Tokushige~\cite{Matsumoto},
states that if $n\geq 2k$, then $|\A||\B|\leq \binom{n-1}{k-1}^2,$
where the equality holds for $n>2k$ if and only if $\A=\B$ is a star.
In the present paper, we first give a stability result of this theorem by using Filmus's FKN theorem on the slice~\cite{Filmus} and linear algebra method as follows:
There exists a constant $C>1$ such that if $n\geq 2.07k$ and $|\A||\B|\geq (1-\epsilon)\binom{n-1}{k-1}^2$,
where $\epsilon\leq \frac{k^2}{C^2 n ^2 }$,
then there is a star $\mathcal{S}$ such that
$|\mathcal{S} \Delta \A|\leq C \epsilon \binom{n}{k}$ and
$|\mathcal{S} \Delta \B|\leq C \epsilon \binom{n}{k}.$
Moreover, based on this stability result and the eigenvalues of the matrices of the Johnson scheme,
we present an Erd\H{o}s--Ko--Rado theorem for cross-intersecting families in the Euclidean inner product showing that if $n\geq 2k$ and $k\geq d \geq 0$, then
$$\big\langle\v_d(\A),\v_d(\B)\big\rangle
\leq \frac{\binom{k}{d}\binom{k-1}{d}}{\binom{n-1}{d}}\binom{n-1}{k-1}^2
+\binom{k-1}{d-1} \binom{n-d-1}{k-d}\binom{n-1}{k-1},$$
together with uniqueness and a corresponding stability result, where $\mathbf{v}_d(\A) \in \RR^{\binom{[n]}{d}}$ is the $d$-degree vector of $\A$ whose $U$-entry is the number of members in $\A$ containing $U$.
To achieve this, we develop a cross version of Bey's inequality~\cite{Bey} and the stability result of this cross version,
which allows us to provide a $t$-cluster generalization
of the Erd\H{o}s--Ko--Rado theorem in $\ell_p$-norm with $p\geq2$.
\end{abstract}

\section{Introduction}

Several fundamental problems in extremal combinatorics
concern determining the maximum size of a discrete structure
subject to certain restrictions.
One typical problem is to find the largest cardinality of an intersecting family of sets.
Let $\binom{[n]}{k}$ denote the set of all $k$-element subsets of the set
$\{1,\ldots,n\}=:[n]$.
A family $\A \subseteq \binom{[n]}{k}$ is called {\em intersecting} if
$S\cap T\neq \emptyset$ for any $S,T\in \A$.
The celebrated theorem of Erd\H{o}s, Ko, and Rado~\cite{Erdos} states that
if $n\geq 2k$ and $\A \subseteq \binom{[n]}{k}$ is an intersecting family,
then
$$|\A| \leq \binom{n-1}{k-1},$$
where the equality holds for $n>2k$ if and only if $\A$ is a star, i.e., the family $\S_i=\{S\in \binom{[n]}{k}: i\in S\}$ for a fixed $i\in[n]$.
The Erd\H{o}s--Ko--Rado theorem is widely regarded as a central result in extremal combinatorics.
Over the years, numerous extensions and applications of this theorem have been proved, we refer the interested reader to~\cite{Deza,FTsurvey} for surveys and~\cite{Frankl} for several different proofs.

Pyber~\cite{Pyber} presented a natural cross-intersecting generalization
of the Erd\H{o}s--Ko--Rado theorem.
Two families $\A,\B\subseteq \binom{[n]}{k}$ are said to be {\em cross-intersecting} if $A\cap B\neq \emptyset$
for any $A\in \A$ and $B\in \B$.
He proved the following tight upper bound for the product of the sizes of
two cross-intersecting families, and Matsumoto and Tokushige~\cite{Matsumoto} characterized the unique extremal family attaining the upper bound.

\begin{theorem}[Pyber~\cite{Pyber}, Matsumoto--Tokushige~\cite{Matsumoto}]\label{EKRcross}
If $n \geq 2k$ and $\A,\B\subseteq \binom{[n]}{k}$ are two cross-intersecting families, then
$$|\A||\B|\leq \binom{n-1}{k-1}^2,$$
where the equality holds for $n>2k$ if and only if $\A=\B$ is a star.
\end{theorem}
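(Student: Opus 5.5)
We prove the bound spectrally, working with characteristic vectors in the Kneser graph. Let $K=K(n,k)$ be the Kneser graph on $\binom{[n]}{k}$, with adjacency matrix $M$. Its eigenvalues are $\lambda_j=(-1)^j\binom{n-k-j}{k-j}$ for $j=0,\dots,k$, and the eigenspaces $V_j$ are the usual Johnson-scheme eigenspaces. In particular, $V_0$ is spanned by $\1$, and $V_0\oplus V_1$ is spanned by the characteristic vectors of the stars $\S_i$.

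Let $\f,\mathbf{g}$ be the characteristic vectors of $\A,\B$, and write $N=\binom{n}{k}$. Cross-intersection means $\f^{T} M \mathbf{g}=0$. Put $\alpha=|\A|/N$ and $\beta=|\B|/N$, and decompose
$$\f=\alpha\1+\f',\qquad \mathbf{g}=\beta\1+\mathbf{g}'$$
with $\f',\mathbf{g}'\perp \1$. Then
$$0=\lambda_0\alpha\beta N+\f'^{T}M\mathbf{g}',$$
so $\lambda_0\alpha\beta N\le |\f'^T M\mathbf{g}'|$.

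For $n\ge 2k$ the largest absolute value of a nontrivial eigenvalue is $|\lambda_1|=\binom{n-k-1}{k-1}$; this follows from the monotonicity of $\binom{n-k-j}{k-j}$ in $j$. Cauchy--Schwarz then gives
$$|\f'^TM\mathbf{g}'|\le |\lambda_1|\,\|\f'\|\|\mathbf{g}'\|=|\lambda_1|N\sqrt{\alpha(1-\alpha)\beta(1-\beta)}.$$
Since $\lambda_0/|\lambda_1|=(n-k)/k$, we get
$$\frac{n-k}{k}\sqrt{\alpha\beta}\le \sqrt{(1-\alpha)(1-\beta)}\le 1-\sqrt{\alpha\beta},$$
where the last step is AM--GM. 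Writing $s=\sqrt{\alpha\beta}$, this rearranges to $s\cdot\frac{n}{k}\le 1$, i.e. $\alpha\beta\le (k/n)^2$. That is exactly $|\A||\B|\le\binom{n-1}{k-1}^2$.

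For the equality case with $n>2k$, every inequality must be tight.
\begin{enumerate}[label=(\roman*)]
\item AM--GM is tight only when $\alpha=\beta=k/n$.
\item Cauchy--Schwarz must be tight with $\f',\mathbf{g}'$ supported on eigenvalue $\pm|\lambda_1|$. For $n>2k$ only $V_1$ attains $|\lambda_1|$; one checks $|\lambda_2|<|\lambda_1|$ strictly when $n>2k$. So $\f',\mathbf{g}'\in V_1$.
\item Tightness of $|\f'^TM\mathbf{g}'|$ forces $\mathbf{g}'=c\f'$, and since the norms agree, $c=\pm1$. The sign must make the sum vanish, so $\f'^T\lambda_1\mathbf{g}'<0$. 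Because $\lambda_1<0$, this means $c=+1$, hence $\A=\B$.
\end{enumerate}
It remains to see that a Boolean vector in $V_0\oplus V_1$ with density $k/n$ is a star. Such a vector has the form $\f(S)=c_0+\sum_{i\in S}c_i$ and takes values in $\{0,1\}$; the standard exchange argument on swapping one element shows it is a star.

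I expect the main obstacle to be the equality characterization, specifically verifying the strict eigenvalue gap $|\lambda_j|<|\lambda_1|$ for $j\ge2$ when $n>2k$, and the Boolean $V_0\oplus V_1$ classification. For the gap I would use the ratio
$$\frac{\binom{n-k-j}{k-j}}{\binom{n-k-j-1}{k-j-1}}=\frac{n-k-j}{k-j}>1.$$
For the classification I would swap $a\in S$ for $b\notin S$, so that $c_a-c_b\in\{-1,0,1\}$ for all $a,b$, and then use the density constraint to force exactly one $c_i=1$ with the rest zero.
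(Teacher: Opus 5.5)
Your proof is correct. The paper does not prove this statement itself: it is quoted from Pyber and Matsumoto--Tokushige and used as a black box. Your argument is nonetheless essentially what the paper's own spectral machinery yields.

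\textbf{The bound.} The inequality is the cross Hoffman--Delsarte bound of Theorem~\ref{CrossHDbound} applied to the Kneser graph. The paper notes in the proof of Claim~\ref{claimprojectionsmall} that there $\bigl(\mu/(\lambda+\mu)\bigr)^2N^2=\binom{n-1}{k-1}^2$. Your chain of Cauchy--Schwarz followed by Lemma~\ref{specialAGinq} is the same computation that appears in the proof of Lemma~\ref{CrossHDboundStability}.

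\textbf{Equality.} Your equality analysis follows the template of the equality case of Theorem~\ref{innerproductABthm}:
\begin{enumerate}[label=(\arabic*)]
\item Lemma~\ref{specialAGinq} forces $|\A|=|\B|$.
\item Tightness of Cauchy--Schwarz, together with the strict gap $|\lambda_j|<|\lambda_1|$ for $j\ge2$ when $n>2k$, puts both vectors in $E_0\oplus E_1$ (your $V_0\oplus V_1$) and makes them equal up to sign.
\item The sign is fixed, here by $\lambda_1<0$ together with $\1_{\A}^{\mathsf T}M\1_{\B}=0$.
\item One finishes with the classification of Boolean degree-one functions on the slice, which the paper takes from Filmus.
\end{enumerate}

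\textbf{The classification step.} This is the one step you leave as a sketch. Your phrase ``exactly one $c_i=1$, the rest zero'' presupposes a normalization, since $c_i\mapsto c_i+t$, $c_0\mapsto c_0-kt$ leaves $f$ unchanged. To complete it:
\begin{itemize}
\item The differences $c_a-c_b\in\{-1,0,1\}$ force the $c_i$ to take values $c$ and $c+1$ on a partition $[n]=P\cup Q$.
\item Hence $f(S)=\mathrm{const}+|S\cap Q|$, where $|S\cap Q|$ ranges over the integer interval $[\max\{0,k-|P|\},\min\{k,|Q|\}]$.
\item Booleanity and non-constancy force this interval to have length exactly one. That means $|Q|=1$, $|P|=1$, $k=1$, or $n=k+1$.
\item For $n>2k$ the case $n=k+1$ is impossible.
\item For $k=1$, density $1/n$ means a single singleton, which is a star.
\item The case $|P|=1$ is the complement of a star. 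It is excluded because its density $(n-k)/n$ differs from $k/n$; this is where $n\neq2k$ is used a second time.
\end{itemize}
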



In recent years, many stability results for Erd\H{o}s--Ko--Rado type theorems have been obtained, see, e.g.,~\cite{FriedgutCom08,Keevashshadow},
and they usually assert that
a family must be close to an extremal family
when its size approximates the maximum possible size.
We refer the interested reader to the article~\cite{EllisJEMS2019} and the references therein for more background and some different   stability results of Erd\H{o}s--Ko--Rado type theorems.
However, 
it seems that there are only a few studies on the stability result for the cross-intersecting version of the Erd\H{o}s--Ko--Rado theorem.
It is noticeable that Frankl, Lee, Siggers, and
Tokushige~\cite[Theorem 1.2]{FLST}
presented a stability result for the shifted cross $t$-intersecting families.

Our first result is the following stability result of Theorem~\ref{EKRcross}, which will be employed to prove the main result of this paper.

\begin{theorem}\label{EKRcrossstabilitythm}
There exists a constant $C>1$ such that the following holds:
Suppose that $n\geq 2.07k$ and $\A,\B\subseteq \binom{[n]}{k}$ are two cross-intersecting families.
If $|\A||\B|\geq (1-\epsilon)\binom{n-1}{k-1}^2$,
where $\epsilon\leq \frac{k^2}{C^2 n ^2 }$,
then there is a star $\mathcal{S}$ such that
$|\mathcal{S} \Delta \A|\leq C \epsilon \binom{n}{k}$ and
$|\mathcal{S} \Delta \B|\leq C \epsilon \binom{n}{k}.$
\end{theorem}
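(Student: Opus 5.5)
We follow the spectral approach suggested in the abstract: a Hoffman-type bound for the Kneser graph combined with Filmus's FKN theorem on the slice. Let $K=K(n,k)$ be the Kneser graph on $\binom{[n]}{k}$, with adjacency matrix $M$. Let $\f=\1_\A$ and $\mathbf{g}=\1_\B$, and write $\alpha=|\A|/\binom nk$ and $\beta=|\B|/\binom nk$. Cross-intersection says exactly that $\f^{\top}M\mathbf{g}=0$. We decompose $\f$ and $\mathbf{g}$ along the eigenspaces $V_0,V_1,\dots,V_k$ of the Johnson scheme. On $V_j$ the matrix $M$ acts by $\lambda_j=(-1)^j\binom{n-k-j}{k-j}$, with $\lambda_0=\binom{n-k}{k}$ and $\lambda_1=-\binom{n-k-1}{k-1}$. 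The plan has three steps.

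\textbf{Step 1: the basic inequality.} Write $\f=\alpha\1+\f_1+\f_{\ge2}$, and similarly for $\mathbf{g}$. The condition $\f^{\top}M\mathbf{g}=0$ gives
\[
\lambda_0\alpha\beta\tbinom nk=|\lambda_1|\langle \f_1,\mathbf{g}_1\rangle-\sum_{j\ge2}\lambda_j\langle \f_j,\mathbf{g}_j\rangle .
\]
For $n\ge 2.07k$ there is a constant $c>0$ with $|\lambda_j|\le (1-c)|\lambda_1|$ for every $j\ge2$, because $|\lambda_2|/|\lambda_1|=(k-1)/(n-k-1)$. We bound $\langle\f_j,\mathbf{g}_j\rangle$ by Cauchy--Schwarz and then use AM--GM in the form $\|\f_{j}\|\|\mathbf{g}_{j}\|\le \tfrac12(t\|\f_j\|^2+t^{-1}\|\mathbf{g}_j\|^2)$ with $t=\sqrt{\beta/\alpha}$. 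Together with $\|\f\|^2=\alpha\binom nk$ this yields a weighted Hoffman bound:
\[
\sqrt{\alpha\beta}\,\lambda_0\le |\lambda_1|\bigl(\sqrt{(1-\alpha)(1-\beta)}\bigr)-c|\lambda_1|\,\frac{W_{\ge2}}{\binom nk},
\]
where $W_{\ge2}$ denotes the combined weight $\sqrt{\beta/\alpha}\|\f_{\ge2}\|^2+\sqrt{\alpha/\beta}\|\mathbf{g}_{\ge2}\|^2$, suitably normalised. This recovers $\alpha\beta\le (k/n)^2$, which is Theorem~\ref{EKRcross}. If instead $\alpha\beta\ge(1-\epsilon)(k/n)^2$, we get two pieces of information. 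First, $\alpha$ and $\beta$ are each within $O(\sqrt\epsilon)$ relative error of $k/n$; we obtain this by analysing the function $\sqrt{xy}\lambda_0+|\lambda_1|\sqrt{(1-x)(1-y)}$ near its maximum, and it is here that the hypothesis $\epsilon\le k^2/(C^2n^2)$ is used to keep $\alpha,\beta$ away from degenerate values. Second, $\|\f_{\ge2}\|^2=O(\epsilon\,\alpha\binom nk)$, and the same holds for $\mathbf{g}$.

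\textbf{Step 2: applying FKN.} The vector $\f$ is Boolean and is $O(\epsilon)\cdot\alpha$-close to the degree-one space $V_0\oplus V_1$. Filmus's FKN theorem on the slice then produces a function depending on one coordinate that is close to $\f$. Since $\alpha\approx k/n$, the only options are $\1_{\S_i}$, a complement of a star, or a constant, and the density forces $\1_{\S_i}$. This gives $|\A\Delta\S_i|=O(\epsilon)\binom nk$. The difficulty is that we need $O(\epsilon)$ in absolute terms, not relative to $\alpha$, so we must use the version of FKN whose error is proportional to $\|\f_{\ge2}\|^2$; that is exactly what our bound provides. The same argument gives $|\B\Delta\S_{i'}|=O(\epsilon)\binom nk$.

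\textbf{Step 3: the two stars coincide.} Suppose $i\ne i'$. Then $\A$ contains almost all of $\S_i$ and $\B$ contains almost all of $\S_{i'}$. But the sets containing $i$ and missing $i'$ and the sets containing $i'$ and missing $i$ span $\Theta(\binom{n-2}{k-1}^2)$ disjoint pairs. Each of $\A$ and $\B$ can miss only $O(\epsilon)\binom nk$ of these sets, and since $\epsilon\le k^2/(C^2n^2)$ this is much smaller than the relevant $\binom{n-2}{k-1}$-sized fractions. So some disjoint pair survives, contradicting cross-intersection, and therefore $i=i'$.

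We expect the main obstacle to be Step 1. The quadratic form is not symmetric in $\f$ and $\mathbf{g}$, and we need the spectral gap to control $\|\f_{\ge2}\|^2$ linearly in $\epsilon$ even when $\alpha\ne\beta$. The weighting $t=\sqrt{\beta/\alpha}$, together with the constraint $n\ge2.07k$, which guarantees a uniform gap $c$, is designed to handle exactly this.
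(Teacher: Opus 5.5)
Your proposal is correct. It has the same architecture as the paper: spectral stability on the Kneser graph, then Filmus's FKN theorem on the slice, then identification of a common star. Each step is executed differently, though.

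\textbf{Spectral step.} The paper proves a general stability version of the cross Hoffman--Delsarte bound (Lemma~\ref{CrossHDboundStability}). It damps the coefficients on the small eigenvalues, following Berger--Zhao, and applies Lemma~\ref{specialAGinq}. Along the way it uses the crude estimate $\sum_{i>1}a_i^2\le N$, and ends with $\|\f_{\ge2}\|^2<64\epsilon\binom nk$. Your weighted AM--GM with $t=\sqrt{\beta/\alpha}$, after dividing by $\sqrt{\alpha\beta}\binom nk$, gives
\[
\frac nk\sqrt{\alpha\beta}+\Bigl(\frac{\alpha+\beta}{2}-\sqrt{\alpha\beta}\Bigr)+\frac c2\cdot\frac{W_{\ge2}}{\sqrt{\alpha\beta}\binom nk}\le 1,
\qquad c=\frac{n-2k}{n-k-1}.
\]
This yields at once $(\sqrt\alpha-\sqrt\beta)^2\le2\epsilon$ and $\|\f_{\ge2}\|^2\le\frac2c\,\epsilon\alpha\binom nk$. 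That bound beats the paper's by the factor $\alpha\approx k/n$.

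\textbf{Ruling out bad FKN outputs and identifying the star.} Your density control lets you discard the FKN outputs $0$, $1$ and $1-f_{\S_i}$ directly. The paper instead shows that these outputs would push $|\A||\B|$ above Pyber's bound. Likewise, your Step~3 contradicts cross-intersection outright. The paper instead bounds $|\A\cap\S_i||\B\cap\S_j|$ by Lemma~\ref{crossstarlemma} and contradicts the size hypothesis.

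\textbf{What each route buys.} The paper's lemma is packaged for arbitrary pseudo-adjacency matrices. Your route is shorter, and carried through it even gives $|\S\Delta\A|=O(\epsilon)\binom{n-1}{k-1}$.

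\textbf{Slips to fix (none fatal).}

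First, with $t=\sqrt{\beta/\alpha}$ the main term is $1-\frac{\alpha+\beta}2$, not $\sqrt{(1-\alpha)(1-\beta)}$, which is smaller. Both are at most $1-\sqrt{\alpha\beta}$, so nothing breaks.

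Second, the relative error in $\alpha,\beta$ that actually comes out is $O(\sqrt{\epsilon n/k})=O(1/C)$, not $O(\sqrt\epsilon)$. This still separates $\alpha$ from $0$, from $1$, and from $1-k/n\ge1.07\,k/n$ once $C$ is large.

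Third, in Step~2 you must check the hypotheses of Theorem~\ref{FNKthninkslice}: $\varepsilon<k/(128n)$, and $C_0\sqrt{\varepsilon}\,n/k<2$ so that $|S|\le1$. This is where the paper really spends the assumption $\epsilon\le k^2/(C^2n^2)$. With your bound $\varepsilon=O(\epsilon k/n)$, both conditions are immediate.

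Fourth, the number of disjoint pairs between $\S_{i,\overline{i'}}$ and $\S_{i',\overline{i}}$ is $\binom{n-2}{k-1}\binom{n-k-1}{k-1}$, not $\Theta\bigl(\binom{n-2}{k-1}^2\bigr)$. The ratio of these is exponentially small in $k$ when $n$ is near $2.07k$. The argument survives because the disjointness graph between the two sub-stars is $\binom{n-k-1}{k-1}$-regular:
\begin{itemize}
\item deleting $x$ sets destroys at most $x\binom{n-k-1}{k-1}$ pairs;
\item so a pair survives once the total number of missing sets is below $\binom{n-2}{k-1}$, which is exactly Hilton's inequality as used in Lemma~\ref{crossstarlemma};
\item and this holds since $O(\epsilon)\binom nk\ll\frac{k(n-k)}{n(n-1)}\binom nk=\binom{n-2}{k-1}$.
\end{itemize}
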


Recently,
in order to get a better understanding of the problem determining Tur\'an density of the tetrahedron,
Balogh, Clemen, and Lidick\'y~\cite{Baloghsy,Balogh} introduced a new notion measuring extremality of hypergraphs, called the codegree squared sum.
Usually, one may view a family $\A \subseteq \binom{[n]}{k}$ as the $k$-uniform hypergraph (shortly $k$-graph) with the vertex set $[n]$ and the edge set $\A$.
The {\em $d$-degree vector} of a $k$-graph $\A\subseteq \binom{[n]}{k}$ is the vector
$\mathbf{v}_d(\A) \in \RR^{\binom{[n]}{d}}$
whose $U$-entry is the number of members in $\A$ containing $U$.
Observe that the  $\ell_1$-norm of the {\em codegree vector} $\mathbf{v}_{k-1}(\A)$ is equal to $k|\A|$.
The {\em codegree squared sum} of a $k$-graph $\A\subseteq \binom{[n]}{k}$ is defined to be the square of the $\ell_2$-norm of the codegree vector of $\A$, that is, $||\mathbf{v}_{k-1}(\A)||^2$,
where $||\cdot||$ denotes the $\ell_2$-norm.
Balogh, Clemen, and Lidick\'y~\cite{Baloghsy,Balogh}
determined asymptotically the codegree squared sum of tetrahedron-free $3$-graphs and some other $3$-graphs.
Thereafter, many researchers investigated extremal problems with respect to the codegree squared sum~\cite{Brooks,Wu,Cao}.
For instance, Brooks and Linz~\cite[Theorem 1.3]{Brooks} proved the following Erd\H{o}s--Ko--Rado theorem with respect to the codegree squared sum: If $n\geq 2k$ and $\A\subseteq \binom{[n]}{k}$ is an intersecting family,
then
$$||\mathbf{v}_{k-1}(\A)||^2
\leq   \binom{n-1}{k-1}\big((k-1)(n-k+ 1)+1\big),$$
where the equality holds for $n>2k$ if and only if $\A$ is a star.

For any $\A,\B\subseteq \binom{[n]}{k}$, observe that
\begin{equation}\label{innerproductform}
\big\langle \mathbf{v}_d(\A),\mathbf{v}_d(\B)\big\rangle
=\sum_{U\in \binom{[n]}{d}}
d_{\A}(U) d_{\B}(U),
\end{equation}
where $d_\mathcal{\A}(U)$ denotes the number of members in $\A$ containing $U$.
Hence,
\[
\|\mathbf{v}_{d}(\A)\|^2
=\big\langle\mathbf{v}_d(\A),\mathbf{v}_d(\A)\big\rangle.
\]
Throughout, we use the convention that $\binom{a}{b}=0$ whenever
$b<0$ or $b>a$.
We also need the following important two parameter:
\begin{align*}
\gamma(n,k,d):=&\|\mathbf{v}_{d}(\S)\|^2=
\binom{n-1}{d-1}\binom{n-d}{k-d}^{2}
+\binom{n-1}{d}\binom{n-d-1}{k-d-1}^{2},\\
\Gamma(n,k,d):= &\|\mathbf{v}_{d}(\mathcal{K}_n^k)\|^2=
\binom {n}{d}\binom{n-d}{k-d}^{2}.
\end{align*}
where $\S\subseteq \binom{[n]}{k}$ is a star and $\mathcal{K}_n^k=\binom{[n]}{k}$ is  the complete $k$-graph.
Inspired by the above results and the inner product form~\eqref{innerproductform},
we give the following main result of the present paper,
which can be viewed as an Erd\H{o}s--Ko--Rado theorem for cross-intersecting families in the Euclidean inner product.
Moreover, it contains Theorems~\ref{EKRcross} and \ref{EKRcrossstabilitythm} as special cases.

\begin{theorem}\label{EKR2normmainthm}
Let $n \geq k\geq1$ and $k\geq d \geq 0$ be integers, and let
$\A,\B\subseteq \binom{[n]}{k}$ be two cross-intersecting families.
\begin{enumerate}[label={\rm (\alph*)}]
\item {\rm(Upper bound)} We have
\[
\big\langle\mathbf{v}_{d}(\A),\mathbf{v}_{d}(\B)\big\rangle
\leq
\begin{cases}
\Gamma(n,k,d),& k\leq n<2k,\\
\gamma(n,k,d),& n\geq 2k.
\end{cases}
\]

\item {\rm(Equality cases)}
If $k\leq n<2k$, equality holds if and only if
$\A=\B=\binom{[n]}k$.
If $n>2k$, equality holds if and only if $\A=\B$ is a star.
If $n=2k$, 
then equality holds precisely as follows:
\begin{enumerate}[label={\rm (\roman*)}]
\item If $d=0$, then $|\A|= |\B|=\frac{1}{2}\binom {n}{k}$ and
    $\mathcal B=\binom{[n]}k\setminus\overline{\mathcal A}$, where $\overline{\A}:=\{[n]\setminus A:A\in\A\}$.

\item If $0<d<k$, then  $\A=\B$ is a star or the complement of a star.

\item If $d=k$, then  $\A=\B$ and $\A$ contains exactly one
member of every complementary pair, that is, $\A$ contains exactly one member of $\{C,[n]\setminus C\}$ for every $C\in \binom{[2k]}{k}$.
\end{enumerate}

\item {\rm(Stability)} There exists a constant $C>1$ such that the following holds: If $n\geq 2.07k$ and
\[
\big\langle\v_d(\A),\v_d(\B)\big\rangle
\geq (1-\epsilon)\gamma(n,k,d),
\]
where $\epsilon\leq \frac{k^2}{C^2 n ^2 }$, then there is a star
$\mathcal{S}$ such that
$|\mathcal{S} \Delta \A|\leq C \epsilon \binom{n}{k}$ and
$|\mathcal{S} \Delta \B|\leq C \epsilon \binom{n}{k}$.
\end{enumerate}
\end{theorem}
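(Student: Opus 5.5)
Write $M_d$ for the $\binom{[n]}{k}\times\binom{[n]}{d}$ inclusion matrix and $\1_\A,\1_\B$ for characteristic vectors. Then $\v_d(\A)=M_d^{T}\1_\A$, so
\[
\langle\v_d(\A),\v_d(\B)\rangle=\1_\A^{T}W_d\1_\B,\qquad W_d:=M_dM_d^{T}.
\]
Here $W_d(S,T)=\binom{|S\cap T|}{d}$, so $W_d$ lies in the Bose--Mesner algebra of the Johnson scheme $J(n,k)$. Decomposing into the eigenspaces $V_0\oplus\cdots\oplus V_k$, $W_d$ acts on $V_j$ by a nonnegative eigenvalue $\lambda_j(d)$. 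From the standard formula for $M_dM_d^T$ we get $\lambda_j(d)=\binom{k-j}{d-j}\binom{n-d-j}{k-d}$ for $j\le d$ and $\lambda_j(d)=0$ for $j>d$.

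\emph{Case $k\le n<2k$.} Every pair of $k$-sets meets, and $d_\A(U)\le\binom{n-d}{k-d}$ for every $U$. Hence the inner product is at most $\Gamma(n,k,d)$ termwise, with equality iff $d_\A(U)=d_\B(U)=\binom{n-d}{k-d}$ for all $U$, i.e.\ $\A=\B=\binom{[n]}{k}$.

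\emph{Case $n\ge 2k$.} This is a cross version of Bey's argument. Write $\1_\A=\sum_j a_j$ and $\1_\B=\sum_j b_j$ with $a_j,b_j\in V_j$. Then
\[
\1_\A^TW_d\1_\B=\sum_{j\le d}\lambda_j(d)\langle a_j,b_j\rangle .
\]
The $j=0$ term equals $\lambda_0(d)|\A||\B|/\binom nk$. The $j\ge1$ terms should be controlled by the Hoffman-type information coming from the Kneser graph. The disjointness matrix $K$ is also in the scheme with eigenvalues $\mu_j=(-1)^j\binom{n-k-j}{k-j}$, and cross-intersection gives $\1_\A^TK\1_\B=0$. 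Setting $\alpha=|\A|/\binom nk$ and $\beta=|\B|/\binom nk$, this yields
\[
\mu_0\alpha\beta\tbinom nk=-\sum_{j\ge1}\mu_j\langle a_j,b_j\rangle\le |\mu_1|\sum_{j\ge1}|\langle a_j,b_j\rangle|.
\]
By Cauchy--Schwarz, $\sum_{j\ge1}\|a_j\|\|b_j\|\le\sqrt{(\alpha-\alpha^2)(\beta-\beta^2)}\binom nk$.

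For the $W_d$ side, I would bound
\[
\sum_{j\ge1}\lambda_j\langle a_j,b_j\rangle\le\lambda_1\sum_{j\ge1}\|a_j\|\|b_j\|,
\]
using that $\lambda_j$ is decreasing in $j$, and also bound $|\A||\B|$ by Theorem~\ref{EKRcross}. Cauchy--Schwarz alone loses the information that the mass sits on $V_1$. So I would instead follow Bey: write the objective as a combination $c_0\1_\A^T\1_\B$-type term plus $c_1\1_\A^T(\text{Johnson } V_1\text{-projector})\1_\B$, and use a linear combination $W_d+tK$ whose eigenvalues on $V_j$, $j\ge2$, are dominated by the value on $V_1$. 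This reduces the problem to maximizing $\lambda_0\alpha\beta\binom nk+\lambda_1\sqrt{\alpha(1-\alpha)\beta(1-\beta)}\binom nk$ subject to the Hoffman relation $\mu_0\alpha\beta\le|\mu_1|\sqrt{\alpha(1-\alpha)\beta(1-\beta)}$. That relation forces $\sqrt{\alpha\beta}\le k/n$. The resulting function is increasing in $\sqrt{\alpha\beta}$, so the maximum is attained at $\alpha=\beta=k/n$. A check then shows the value equals $\gamma(n,k,d)$, matching the star; the star's vector lies in $V_0\oplus V_1$, so every inequality is tight there. This cross Bey inequality is the main obstacle: it requires verifying $\lambda_j(d)\le \lambda_1(d)\,|\mu_j|/|\mu_1|$-type domination for $j\ge2$ when $n\ge2k$, and the sign alternation of $\mu_j$ must be handled, presumably by pairing $W_d$ with $K$ and a projector.

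\emph{Equality for $n>2k$.} Equality forces $\alpha=\beta=k/n$ and $\1_\A,\1_\B\in V_0\oplus V_1$ with $b_1$ parallel to $a_1$. Boolean functions of degree at most $1$ on the slice are dictators, i.e.\ stars (Filmus), so $\A=\B$ is a star, using cross-intersection. For $n=2k$ we have $\mu_j=\pm1$ up to scaling and degenerate cases appear. For $d=0$ we just need $|\A||\B|$ maximal, i.e.\ $\B$ is the complement-free family. For $d=k$ the inner product is $|\A\cap\B|$, which is at most $\frac12\binom{2k}{k}$ because $\A\cap\B$ is intersecting, giving the stated pairing description. For $0<d<k$, degree $\le1$ Boolean functions with $\alpha=1/2$ are stars or co-stars.

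\emph{Stability.} The inequalities above give $|\A||\B|\ge(1-O(\epsilon))\binom{n-1}{k-1}^2$, and the constant is adjusted since $\lambda_0$ and $\lambda_1$ are comparable to $\gamma$. Theorem~\ref{EKRcrossstabilitythm} then yields the star $\S$. The $k^2/n^2$ constraint on $\epsilon$ is absorbed into the choice of $C$.
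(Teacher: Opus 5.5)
\textbf{Where the plan goes wrong.} The problem is the point where you drop your first route. You say Cauchy--Schwarz ``loses the information that the mass sits on $V_1$'' and make the proof depend on a domination $\lambda_j(d)\le\lambda_1(d)|\mu_j|/|\mu_1|$ for $j\ge2$. That inequality is false in general. We have $\lambda_2(d)/\lambda_1(d)=\frac{d-1}{k-1}\cdot\frac{n-k-1}{n-d-1}$ while $|\mu_2|/|\mu_1|=\frac{k-1}{n-k-1}$, so it fails for $d\ge2$ and $n$ large; for $n=100$, $k=5$, $d=3$ the two ratios are about $0.49$ and $0.043$. Organized that way, the proof stalls.

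\textbf{The fix is the route you abandoned, which is the paper's proof.} Since $0\le\lambda_j(d)\le\lambda_1(d)$ for $j\ge1$, Cauchy--Schwarz gives, for arbitrary $\A,\B$ (cross-intersection not used),
$\langle\v_d(\A),\v_d(\B)\rangle\le\lambda_0\alpha\beta N+\lambda_1\sqrt{\alpha(1-\alpha)\beta(1-\beta)}\,N$ with $N=\binom nk$. This is exactly the objective of your own ``reduced problem''. Using $(1-\alpha)(1-\beta)\le(1-\sqrt{\alpha\beta})^2$ (Lemma~\ref{specialAGinq}), it becomes $(\lambda_0-\lambda_1)\alpha\beta N+\lambda_1\sqrt{\alpha\beta}\,N$, which is increasing in $\alpha\beta$ alone; this is Theorem~\ref{innerproductABthm}. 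Then $\alpha\beta\le k^2/n^2$, from Theorem~\ref{EKRcross} or equivalently your cross-Hoffman relation, yields $\gamma(n,k,d)$. Nothing is lost, because the star lies in $V_0\oplus V_1$. The $V_1$ information is needed only for equality, and there it follows from $\lambda_j(d)<\lambda_1(d)$ for $j\ge2$ when $0<d<k$. The $W_d+tK$ detour should simply be dropped.

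\textbf{Equality cases $d\in\{0,k\}$.} Your equality mechanism ``equality forces $\1_\A\in V_0\oplus V_1$'' needs $\lambda_1(d)>\lambda_2(d)$. That fails for $d=k$, where $W_k=I$, and says nothing for $d=0$, where $W_0=J$. These cases must be handled directly:
\begin{itemize}
\item $d=0$, $n>2k$: this is Theorem~\ref{EKRcross}.
\item $d=0$, $n=2k$: use $\B\subseteq\binom{[n]}k\setminus\overline{\A}$, so $|\A||\B|\le|\A|(N-|\A|)\le N^2/4$.
\item $d=k$, $n>2k$: use $|\A||\B|\ge|\A\cap\B|^2$ and the uniqueness part of Theorem~\ref{EKRcross}.
\item $d=k$, $n=2k$: you still must show $\A=\B=\F:=\A\cap\B$. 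If $H\in\A\setminus\F$, then $[n]\setminus H\in\F\subseteq\B$, contradicting cross-intersection.
\end{itemize}

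\textbf{Stability.} The loss must be an absolute factor, not one depending on how $\lambda_0,\lambda_1$ compare with $\gamma$. Let $g(P)=(\lambda_0-\lambda_1)P/N+\lambda_1\sqrt P$ and $X=\binom{n-1}{k-1}$. Then $g((1-\epsilon)^2X^2)\le(1-\epsilon)g(X^2)$, so monotonicity of $g$ gives $|\A||\B|\ge(1-2\epsilon)X^2$ (Lemma~\ref{stabilitylemmaip}). Theorem~\ref{EKRcrossstabilitythm} then applies with $C$ doubled.
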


Note that $\langle\mathbf{v}_0(\A),\mathbf{v}_0(\B)\rangle=|\A||\B|$ and
$\gamma(n,k,0)=\binom{n-1}{k-1}^2$.
Thus the case $d=0$ of Theorem~\ref{EKR2normmainthm} includes Theorems~\ref{EKRcross} and~\ref{EKRcrossstabilitythm}.

By taking $\A=\B$ in Theorem~\ref{EKR2normmainthm}, we immediately obtain the following
Erd\H{o}s--Ko--Rado theorem in $\ell_2$-norm, which includes
\cite[Theorem 1.3]{Brooks} as the special case $d=k-1$.

\begin{corollary}\label{EKR2normmainthm000}
Let $n \geq k\geq1$ and $k\geq d \geq 0$ be integers, and let
$\A \subseteq \binom{[n]}{k}$ be an intersecting family.
\begin{enumerate}[label={\rm (\alph*)}]
\item {\rm(Upper bound)} We have
\[
||\mathbf{v}_{d}(\A)||^2
\leq
\begin{cases}
\Gamma(n,k,d),& k\leq n<2k,\\
\gamma(n,k,d),& n\geq 2k.
\end{cases}
\]

\item {\rm(Equality cases)}
If $k\leq n<2k$, equality holds if and only if
$\A=\binom{[n]}k$.
If $n>2k$, equality holds if and only if $\A$ is a star.
If $n=2k$, then equality holds precisely as follows:
\begin{enumerate}[label={\rm (\roman*)}]
\item If $d\in\{0,k\}$, then $\A$ contains exactly one member of every complementary pair.

\item If $0<d<k$, then $\A$ is a star or the complement of a star.
\end{enumerate}

\item {\rm(Stability)} There exists a constant $C>1$ such that the following holds: If $n\geq 2.07k$ and
\[
||\mathbf{v}_{d}(\A)||^2 \geq (1-\epsilon)\gamma(n,k,d),
\]
where $\epsilon\leq \frac{k^2}{C^2 n ^2 }$, then there is a star
$\mathcal{S}$ such that
$|\mathcal{S} \Delta \A|\leq C \epsilon \binom{n}{k}$.
\end{enumerate}
\end{corollary}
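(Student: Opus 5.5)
The corollary follows by specializing Theorem~\ref{EKR2normmainthm} to $\B=\A$. If $\A$ is intersecting, then the pair $(\A,\A)$ is cross-intersecting, and by \eqref{innerproductform} we have $\langle\v_d(\A),\v_d(\A)\rangle=\|\v_d(\A)\|^2$. Part (a) is then Theorem~\ref{EKR2normmainthm}(a) applied to this pair. Part (c) is Theorem~\ref{EKR2normmainthm}(c) applied to the same pair: the hypothesis $\|\v_d(\A)\|^2\ge(1-\epsilon)\gamma(n,k,d)$ is exactly the required lower bound on the inner product. The conclusion gives a star $\S$ with $|\S\Delta\A|\le C\epsilon\binom nk$, where the second bound, on $|\S\Delta\B|$, is redundant.

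For part (b), equality for $\A$ means equality for the pair $(\A,\A)$. So the equality characterizations of Theorem~\ref{EKR2normmainthm}(b) apply, with the extra constraint $\A=\B$.

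\emph{Case $k\le n<2k$.} We get $\A=\binom{[n]}k$, and conversely this family is intersecting and attains $\Gamma$.

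\emph{Case $n>2k$.} We get that $\A$ is a star. Conversely, a star is intersecting and attains $\gamma$ by definition.

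\emph{Case $n=2k$, $0<d<k$.} Item (ii) gives that $\A$ is a star or the complement of a star. Conversely, the complement of a star $\{[n]\setminus S: i\in S\}$ consists of the $k$-sets missing $i$, which lie in a ground set of size $2k-1<2k$, so it is intersecting. Its $d$-degree vector is a permutation of that of the star under the complementation symmetry $d_{\overline{\A}}(U)$ versus $d_\A(U)$, giving the same value $\gamma$. Since this is already asserted in the theorem, I rely on it.

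\emph{Case $n=2k$, $d=k$.} Item (iii) with $\A=\B$ says $\A$ contains exactly one member of each complementary pair.

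\emph{Case $n=2k$, $d=0$.} Item (i) says $|\A|=\frac12\binom nk$ and $\A=\binom{[n]}k\setminus\overline{\A}$. This means that for each complementary pair $\{C,[n]\setminus C\}$, exactly one of them lies in $\A$, since $C\in\A$ if and only if $[n]\setminus C\notin\A$. Conversely, any such $\A$ is intersecting: two $k$-sets in $[2k]$ are disjoint only if they are complementary. Then $\|\v_0(\A)\|^2=|\A|^2=\binom{2k-1}{k-1}^2=\gamma(2k,k,0)$. The same converse gives equality for $d=k$, since $\|\v_k(\A)\|^2=|\A|$ and $\gamma(2k,k,k)=\binom{2k-1}{k-1}$.

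The only point requiring care is translating item (i) of Theorem~\ref{EKR2normmainthm}(b) into the complementary-pair description, and merging the cases $d=0$ and $d=k$. This is the small computation above; everything else is direct substitution.
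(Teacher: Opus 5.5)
Your proposal is correct and matches the paper, which obtains the corollary by setting $\B=\A$ in Theorem~\ref{EKR2normmainthm}; your unpacking of items (i)--(iii), including turning $\A=\binom{[n]}{k}\setminus\overline{\A}$ into the complementary-pair condition, is right. One side remark is false, though harmless because you fall back on the theorem: the $d$-degree vector of $\binom{[2k]}{k}\setminus\S_i$ is not a permutation of $\v_d(\S_i)$, since it vanishes on every $d$-set containing $i$ while every entry of $\v_d(\S_i)$ is at least $\binom{2k-d-1}{k-d-1}>0$ (which is why the two families differ in $\ell_p$ for $p>2$); the $\ell_2$-norms agree because complementation preserves $|S\cap T|$ on $\binom{[2k]}{k}$ and $\|\v_d(\A)\|^2=\sum_{S,T\in\A}\binom{|S\cap T|}{d}$.
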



We note that, in recent independent work,  Cao, Lu, and Zhang~\cite[Theorem 1.5]{Cao} proved Corollary~\ref{EKR2normmainthm000}~(a) and~(b) in a stronger setting in $\ell_p$-norm:
Let $k\geq2$, $n\geq2k$, $1\leq d\leq k-1$, and let $p\geq2$ be real.
If
$\A\subseteq\binom{[n]}{k}$ is intersecting, then
\begin{equation}\label{eq:all-level-main}
||\mathbf{v}_{d}(\A)||_p^p
\leq \|\mathbf{v}_{d}(\S)\|_p^p
=
\binom{n-1}{d-1}\binom{n-d}{k-d}^{\!p}
+
\binom{n-1}{d}\binom{n-d-1}{k-d-1}^{\!p},
\end{equation}
where $||\cdot||_p$ denotes the $\ell_p$-norm and $\S\subseteq \binom{[n]}{k}$ is a star.
To prove it, they introduce the convex transfer principle.

The proof of Theorem~\ref{EKR2normmainthm} relies on Theorem~\ref{EKRcrossstabilitythm}
and a cross version of Bey's inequality~\cite{Bey}, that is, Theorem~\ref{innerproductABthm} and its stability result, Lemma~\ref{stabilitylemmaip}.
The proof strategy allows us to obtain exact $\ell_2$-norm bounds from
classical extremal results whenever the extremal configuration is a star.
We combine this method and the convex transfer principle by Cao, Lu, and Zhang~\cite{Cao}
to give a $t$-cluster generalization of the Erd\H{o}s--Ko--Rado theorem in the $\ell_p$-norm for $p\geq2$.



Let $2\leq t\leq k$ and $n\geq tk/(t-1)$ be integers.
A family $\{A_1,\ldots, A_t\}\subseteq \binom{[n]}{k}$ is a {\em $t$-cluster} if
$ \cap_{i=1}^t A_i=\emptyset$ and $\left|\cup_{i=1}^t A_i\right|\leq 2k$.
A family $\mathcal{A} \subseteq \binom{[n]}{k}$ is called {\em $t$-cluster-free} if it contains no $t$-clusters.
Note that a family $\mathcal{A} \subseteq \binom{[n]}{k}$ is intersecting if and only if it is $2$-cluster-free.
Thus, we can restate the Erd\H{o}s--Ko--Rado theorem as saying that any $2$-cluster-free family has size at most $\binom{n-1}{k-1}$.
Mubayi~\cite{Mubayi2006} conjectured that any $t$-cluster-free family must have size at most $\binom{n-1}{k-1}$.
He solved this conjecture for $t=3$ in~\cite{Mubayi2006} and later proved  a stability result for every $t\geq 2$ ~\cite[Theorem 2]{Mubayi2007}.
The conjecture was completely solved by Currier~\cite[Theorem 2]{Currier}.
We combine these two result into the following theorem and refer the reader to~\cite{Currier,Mubayi2006,Mubayi2007} for more background and related results.

\begin{theorem}[\cite{Currier,Mubayi2007}]\label{dclusterEKR}
Let $k \geq   t\geq 2$ and $n\geq tk/(t-1)$ be integers.
Let $\A \subseteq \binom{[n]}{k}$ be a  $t$-cluster-free family.
\begin{enumerate}[label={\rm (\alph*)}]

\item {\rm(Upper bound)}
We have
\[
|\A|\leq\binom{n-1}{k-1}.
\]

\item {\rm(Uniqueness)}
If $(t,n)\neq(2,2k)$, then equality holds if and only if
$\A$ is a star.

\item {\rm(Stability)}
For every $\delta>0$, there exist
$\epsilon=\epsilon(\delta,t,k)>0$ and $n_0=n_0(\epsilon,t,k)$
such that the following holds for every $n>n_0$:
If
\[
|\A|\geq
(1-\epsilon)\binom{n-1}{k-1},
\]
then there exists an
$S\in\binom{[n]}{n-1}$ such that
\[
\left|\A\cap\binom{S}{k}\right|
<
\delta\binom{n-1}{k-1}.
\]

\end{enumerate}
\end{theorem}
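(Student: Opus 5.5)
The statement is quoted from \cite{Currier,Mubayi2007} and is used here only as a black box, but I would reprove it along the following lines. For part (a) I would use a Katona-type cyclic permutation argument. Fix a cyclic ordering $\sigma$ of $[n]$ and call the $n$ sets of $k$ consecutive elements the \emph{arcs} of $\sigma$. The goal is a circle lemma: if $\A$ is $t$-cluster-free and $n\ge tk/(t-1)$, then at most $k$ arcs of $\sigma$ lie in $\A$. Averaging over all cyclic orders then gives
\[
|\A|\cdot n\,k!\,(n-k)! \le k\cdot (n-1)!,
\]
since each $k$-set is an arc in $n\,k!\,(n-k)!$ orders. This is exactly $|\A|\le\binom{n-1}{k-1}$.

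To prove the circle lemma, suppose more than $k$ arcs of $\sigma$ belong to $\A$. I would look for $t$ of them that lie in a common window of $2k$ consecutive positions and have empty common intersection. Index arcs by their starting point. Since there are more than $k$ starting points among $n$, pigeonhole gives two chosen arcs $A_1,A_2$ whose starting points differ by exactly $k$ or more, i.e.\ disjoint arcs. If these two sit inside a $2k$-window we are done for $t=2$. For general $t$ one needs $t$ arcs whose starting points in a window spread out enough that no point is covered by all of them; the condition $n\ge tk/(t-1)$, equivalently $n-k\ge n/t$, is what makes a gap-counting argument over the $n$ possible windows succeed. \emph{This is the step I expect to be the main obstacle.} I would first try to reduce to the case where the chosen arcs are ``spread'', namely that every $k$ consecutive starting points contain a chosen one. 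If this reduction fails, I would fall back on Currier's structural reduction from $t$-cluster-free to intersecting-like configurations.

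For part (b), equality forces exactly $k$ arcs of $\A$ on every cyclic order. I would then show, as in Katona's proof, that when $(t,n)\ne(2,2k)$ these $k$ arcs must share a common point. Adjacent transpositions of the cyclic order propagate this common point, so it is the same point $i$ for all orders, and $\A$ is the star $\S_i$.

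For part (c), with $n$ large relative to $k$ and $t$, I would follow Mubayi's approach. If every element has degree at most $(1-\delta')\binom{n-1}{k-1}$, apply the Frankl/Füredi delta-system (kernel) method. This yields either many pairwise disjoint members whose unions of size at most $2k$ form $t$-clusters, or a sharp upper bound $|\A|\le(1-c(\delta))\binom{n-1}{k-1}$ of Hilton--Milner type. Either outcome contradicts $|\A|\ge(1-\epsilon)\binom{n-1}{k-1}$. Hence some $i$ has large degree, and taking $S=[n]\setminus\{i\}$ forces $|\A\cap\binom{S}{k}|<\delta\binom{n-1}{k-1}$.
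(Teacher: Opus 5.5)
The paper does not prove this theorem. Parts (a) and (b) are quoted from Currier and part (c) from Mubayi, and the result is used only as a black box in the proof of Theorem~\ref{clusterEKRlp}. Your reproof has a genuine gap at the circle lemma, which is \emph{false} for every $t\ge 3$, not merely hard.

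Take $k\ge t\ge 3$, $n\ge 3k-1$, and the cyclic order $0,1,\dots,n-1$. Consider the $k$ arcs containing $0$; their union is the $2k-1$ positions $-(k-1),\dots,k-1$. Add one further arc $J$ disjoint from that union. Any $t$ of these $k+1$ arcs with empty intersection must contain $J$ together with at least two distinct arcs through $0$. Those two arcs already cover at least $k+1$ points, all outside $J$, so the union has size at least $2k+1>2k$. Hence this is a $t$-cluster-free family with $k+1$ arcs on one cycle. A concrete instance is $k=t=3$, $n=8$: $\{6,7,0\},\{7,0,1\},\{0,1,2\},\{3,4,5\}$.

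Likewise, $\lfloor n/k\rfloor$ pairwise disjoint arcs form a $t$-cluster-free family for $t\ge3$, since any $t$ of them have union $tk>2k$, and $\lfloor n/k\rfloor>k$ once $n\ge k(k+1)$. The point is that the condition $|\bigcup A_i|\le 2k$ is local, so arcs far apart on a long cycle never form clusters. No per-cycle bound of $k$ exists, and no gap-counting over windows can rescue it. Your averaging argument is therefore valid only for $t=2$, where $2$-cluster-free means intersecting and you recover Katona's proof of EKR.

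Your displayed count is also off by a factor $n$. With $(n-1)!$ cyclic orders, each $k$-set is an arc in $k!\,(n-k)!$ of them, so the inequality should read $|\A|\,k!\,(n-k)!\le k\,(n-1)!$.

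Part (b) inherits the gap, since it rests on equality in the per-cycle bound, which is unavailable for $t\ge3$. In part (c) the decisive dichotomy is asserted rather than derived. You do not explain how a delta-system/kernel decomposition of a family with all degrees at most $(1-\delta')\binom{n-1}{k-1}$ yields either a $t$-cluster or a bound $(1-c)\binom{n-1}{k-1}$. Moreover, the first alternative you name, ``pairwise disjoint members whose unions of size at most $2k$ form $t$-clusters,'' cannot occur for $t\ge 3$, because $t$ pairwise disjoint $k$-sets have union $tk>2k$. Your closing step, $|\A\cap\binom{S}{k}|=|\A|-d(i)\le\delta'\binom{n-1}{k-1}$, also uses the bound $|\A|\le\binom{n-1}{k-1}$ from (a), which you have not established for $t\ge3$. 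Finally, ``falling back on Currier's structural reduction'' is a citation, not an argument. That is in effect what the paper does, and what you would need to do as well unless you supply a genuinely different proof of (a) for $t\ge 3$.
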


%
%
%
%

Brooks and Linz~\cite[Theorem 3.6]{Brooks} proved that if $n\geq tk/(t-1)$, $k>t>1$, and $\A\subseteq\binom{[n]}k$ is $t$-cluster-free, then
$$||\mathbf{v}_{k-1}(\A)||^2
\leq   \binom{n-1}{k-1}\big((k-1)(n-k+ 1)+1\big). $$
Our third result provides an $\ell_p$-norm extension of Theorem~\ref{dclusterEKR}, which includes \cite[Theorem 3.6]{Brooks} and~\cite[Theorem 1.5]{Cao} as special cases.


\begin{theorem}
\label{clusterEKRlp}
Let $k \geq t\geq 2$, $k\geq d\geq 0$, and $n\geq tk/(t-1)$ be integers.
Let $p\geq2$ be real and let
$\A\subseteq\binom{[n]}k$ be a $t$-cluster-free family.
\begin{enumerate}[label={\rm (\alph*)}]

\item {\rm(Upper bound)} We have
\begin{equation}\label{eq:cluster-lp-bound}
\|\mathbf{v}_d(\A)\|_p^p
\leq \binom{n-1}{d-1}\binom{n-d}{k-d}^{\!p}
+\binom{n-1}{d}\binom{n-d-1}{k-d-1}^{\!p}=: \Phi_p(n,k,d).
\end{equation}

\item {\rm(Equality cases)}
\begin{enumerate}[label={\rm (\roman*)}]
\item If $d\in\{0,k\}$, then equality in~\eqref{eq:cluster-lp-bound}
holds if and only if
\[
|\A|=\binom{n-1}{k-1}.
\]
Consequently, if $(t,n)\neq(2,2k)$, equality holds if and only if
$\A$ is a star.  If $(t,n)=(2,2k)$, equality holds if and only if
$\A$ contains exactly one member of every complementary pair
$\{F,[n]\setminus F\}$.

\item Suppose that $1\leq d\leq k-1$ and $p=2$.  If
$(t,n)\neq(2,2k)$, equality holds if and only if $\A$ is a star.
If $(t,n)=(2,2k)$, equality holds if and only if $\A$ is a star
or the complement $\binom{[n]}k\setminus\mathcal S_x$ of a star
$\mathcal S_x$.

\item Suppose that $1\leq d\leq k-1$ and $p>2$.  Equality holds
if and only if $\A$ is a star.
\end{enumerate}

\item {\rm(Stability)} Fix $t,k,d,p$ as above.  For every
$\delta>0$, there exist
$\epsilon=\epsilon(\delta,t,k,d,p)>0$ and
$n_0=n_0(\delta,t,k,d,p)$ such that the following holds whenever
$n>n_0$.  If $\A\subseteq\binom{[n]}k$ is $t$-cluster-free and
\[
\|\v_d(\A)\|_p^p\geq
(1-\epsilon)\Phi_p(n,k,d),
\]
then there exists an $S\in\binom{[n]}{n-1}$ such that
\[
\left|\A\cap\binom Sk\right|
<\delta\binom{n-1}{k-1}.
\]
\end{enumerate}
\end{theorem}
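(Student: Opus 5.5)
The plan is to reduce everything to the size bound of Theorem~\ref{dclusterEKR}, using a convexity/majorization principle on the degree vector in the spirit of the convex transfer principle of Cao, Lu, and Zhang. The cases $d=0$ and $d=k$ are immediate. For $d=0$, $\|\v_0(\A)\|_p^p=|\A|^p$ and $\Phi_p(n,k,0)=\binom{n-1}{k-1}^p$. For $d=k$, $\|\v_k(\A)\|_p^p=|\A|$ and $\Phi_p(n,k,k)=\binom{n-1}{k-1}$. In both cases part (a), the equality characterization (b)(i), and stability (c) follow directly from Theorem~\ref{dclusterEKR}. At $(t,n)=(2,2k)$, families of size $\binom{n-1}{k-1}$ are exactly those containing one member of each complementary pair.

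For $1\le d\le k-1$, write $x_U=d_\A(U)/\binom{n-d}{k-d}\in[0,1]$ for $U\in\binom{[n]}{d}$. Then
\[
\|\v_d(\A)\|_p^p=\binom{n-d}{k-d}^p\sum_U x_U^p ,\qquad \sum_U x_U=\binom kd|\A|\Big/\binom{n-d}{k-d}.
\]
The key structural input is an upper bound on the tail mass of large degrees: for each threshold $\theta$, I want $\sum_U (x_U-\theta)_+$ to be at most the corresponding quantity for a star. The star's profile has $\binom{n-1}{d-1}$ entries equal to $1$ and $\binom{n-1}{d}$ entries equal to $\frac{k-d}{n-d}$.

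Given that majorization-type domination (weak submajorization by the star profile), Karamata's inequality for the convex function $x^p$ with $p\ge 1$ yields (a). To prove the domination, I would split $\binom{[n]}{d}$ into \emph{heavy} sets, those with $x_U>\frac{k-d}{n-d}$, and the rest. Sets $U$ with large degree force the link families $\A(U)=\{A\setminus U: U\subseteq A\in \A\}$ to interact. If $U,U'$ are disjoint and both have large degree, one can choose $A\supseteq U$ and $A'\supseteq U'$ that are disjoint with $|A\cup A'|\le 2k$, which produces a $2$-cluster. More generally, $t$ sets whose intersection is empty yield a $t$-cluster via the links. Hence the heavy $d$-sets form a $t$-wise intersecting-type (cluster-free) $d$-uniform family. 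Its size is bounded by $\binom{n-1}{d-1}$ by Theorem~\ref{dclusterEKR} applied at uniformity $d$, provided $n\ge td/(t-1)$, which holds. The total mass $\sum_U x_U$ is also bounded through $|\A|\le\binom{n-1}{k-1}$. The intended cleaner route is the cross Bey-type inequality (Theorem~\ref{innerproductABthm}) used for $p=2$. That inequality bounds $\|\v_d(\A)\|^2$ by a linear combination of $|\A|^2$ and $|\A|$ with nonnegative coefficients, and monotonicity in $|\A|$ together with Theorem~\ref{dclusterEKR}(a) then gives the $p=2$ case exactly. For $p>2$, I would combine the $p=2$ bound with the pointwise bound $x_U\le1$ via the convex transfer principle. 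Write $x^p$ on $[\frac{k-d}{n-d},1]$ as dominated by the chord through these two star values, and apply the $\ell_1$ and $\ell_2$ information. Since a function convex in $x^2$ is controlled by its values at the star's two levels, this gives $\sum x_U^p\le$ the star value.

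Equality analysis: for $p=2$, equality in the Bey-type inequality forces $|\A|=\binom{n-1}{k-1}$ together with the equality structure of that inequality. With Theorem~\ref{dclusterEKR}(b) this gives stars, plus, at $(t,n)=(2,2k)$, the complements of stars, which one checks have the same degree spectrum. For $p>2$ the chord bound is strict unless every $x_U\in\{\frac{k-d}{n-d},1\}$ with the star multiplicities, and complements of stars fail this. Stability (c) follows by running the same chain with slack. A near-extremal $\ell_p$ value forces $|\A|\ge(1-\epsilon')\binom{n-1}{k-1}$, because the bound is monotone and strictly increasing in $|\A|$ with a quantitative gap, and then Theorem~\ref{dclusterEKR}(c) applies. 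The main obstacle is the transfer from $p=2$ to $p>2$, specifically proving that the degree profile is dominated so that Karamata applies. If the chord argument fails, I would instead prove the heavy-set cluster-freeness carefully and use it directly.
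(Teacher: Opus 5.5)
Your treatment of $d\in\{0,k\}$ and of $p=2$ matches the paper: Bey's inequality with $\mathcal B=\A$, monotonicity of the resulting quadratic in $|\A|$, then $|\A|\le\binom{n-1}{k-1}$. The step you flag as the main obstacle, the passage to $p>2$, is not carried out, and both routes you offer for it fail as stated.

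\textbf{The majorization route is false in the theorem's range.} Take $t=2$, $k=3$, $n=6$, $d=1$. The intersecting family $\{A\in\binom{[6]}{3}:|A\cap[3]|\ge 2\}$ has degrees $(7,7,7,3,3,3)$, while a star has $(10,4,4,4,4,4)$. So $\sum_U(d_\A(U)-5)_+=6>5$. Its three heavy singletons (degree $>4$) are pairwise disjoint, so heavy $d$-sets need not form a cluster-free family.

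\textbf{The chord route does not prove the transfer.} Write $x_U=d_\A(U)$, $L=\binom{n-d-1}{k-d-1}$, $D=\binom{n-d}{k-d}$, $M=\binom{n-1}{k-1}$. The chord of $s\mapsto s^{p/2}$ (or of $x\mapsto x^p$) through the two star levels lies \emph{below} the function to the left of $L$, so coordinates with $x_U<L$ are not controlled. In fact the data you list cannot suffice: $x_U\le D$, $\sum_U x_U\le\binom kd M$, and the already-proved $\sum_U x_U^2\le\Phi_2(n,k,d)$. In the star profile, trade $r$ entries equal to $L$ for $q<r$ entries equal to $\sqrt{r/q}\,L\le D$ plus zeros; this is possible since $D/L=\frac{n-d}{k-d}>1$. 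This preserves $\sum x_U^2$, lowers $\sum x_U$, and multiplies that part of $\sum x_U^p$ by $(r/q)^{(p-2)/2}>1$.

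The paper closes this step by invoking Cao--Lu--Zhang's Lemma~\ref{convextransferlem} with \emph{coupled} hypotheses for the single parameter $m=|\A|$:
\begin{itemize}
\item $x_U\le\min\{m,D\}$;
\item $\sum_U x_U=\binom kd m$ exactly;
\item $\sum_U x_U^2\le B(m)$, i.e.\ Bey's bound at the \emph{same} $m$.
\end{itemize}
The modified vector above violates this coupling: its first moment corresponds to some $m'<M$, while its second moment is $B(M)>B(m')$.

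\textbf{Stability for $p>2$ has a second gap.} The transfer lemma gives the bound $\Phi_p(n,k,d)$ regardless of $|\A|$, and its equality clause is not quantitative. So there is no bound ``monotone and strictly increasing in $|\A|$'' to run with slack. The paper instead uses $x_U^p\le D^{p-2}x_U^2$ to get $\|\v_d(\A)\|_p^p\le D^{p-2}B(m)$. It then notes that $\rho_n=\Phi_p(n,k,d)/\bigl(D^{p-2}\Phi_2(n,k,d)\bigr)\to 1$ as $n\to\infty$ for fixed $k,d,p$. This is exactly where the hypothesis $n>n_0$ enters. For small $\epsilon$ and large $n$, a near-extremal $\ell_p$-norm then forces $B(m)$ to be close to $B(M)$, hence $m$ close to $M$, and only then is Theorem~\ref{dclusterEKR}(c) applied.

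\textbf{A minor slip.} At $(t,n)=(2,2k)$, the complement of a star has the same $\ell_2$-norm as a star but not the same degree multiset: it has $\binom{n-1}{d-1}$ zero degrees. That difference is precisely why it drops out when $p>2$.
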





The rest of this paper is organized as follows.
The next section provides a stability result for the cross-independent version of the Hoffman--Delsarte bound.
Based on this result, in Section 3, we present a proof of Theorem~\ref{EKRcrossstabilitythm}.
Section 4 shows the proofs of Theorems~\ref{EKR2normmainthm} and~\ref{clusterEKRlp}.
We conclude this paper in Section~\ref{ConclusionSec}.


\section{A stability result for the Hoffman--Delsarte bound}

To prove Theorem \ref{EKRcrossstabilitythm}, we introduce
a stability result for the cross-independent version of the Hoffman--Delsarte bound.

Let $G$ be an $N$-vertex graph.
A real symmetric $N\times N$ matrix $M=(M_{ij})$ is called a {\em pseudo adjacency matrix} of $G$ if $M_{ij}= 0$ whenever $\{i, j\}\notin E(G)$,
and the all-one vector $\mathbf{1}$ is an eigenvector of $M$ with a positive eigenvalue.
Let $M$ be a pseudo adjacency matrix of $G$
with eigenvalues $\lambda_1, \lambda_2, \ldots , \lambda_N$,
where $\lambda_1$ is the positive eigenvalue of $M$
corresponding to the eigenvector $\mathbf{1}$.
We write $\lambda(M)=\lambda_1$ and  $\mu (M)=\max_{2\leq i\leq N} |\lambda_i|$.


Ellis, Friedgut, and Pilpel~\cite{EllisJAMS}
proved an Erd\H{o}s--Ko--Rado theorem for cross $t$-intersecting permutations.
To achieve it, they used the following cross-independent version of the Hoffman--Delsarte bound. 
This bound is useful when we study the cross-independent type problems,
see, e.g.,~\cite{Tokushige} for many applications.

\begin{theorem}[Ellis--Friedgut--Pilpel \cite{EllisJAMS}]\label{CrossHDbound}
Let $G$ be an $N$-vertex graph with a pseudo adjacency matrix $M$.
Suppose that $A$ and $B$ are sets of vertices in $G$ such
that there are no edges of $G$ between $A$ and $B$.
Then
$$ |A||B| \leq \left(\frac{\mu (M)}{\lambda (M)+\mu (M)}\right)^2N^2.$$
\end{theorem}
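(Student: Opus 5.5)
We prove Theorem~\ref{CrossHDbound} by expanding the indicator vectors of $A$ and $B$ in an orthonormal eigenbasis of $M$ and comparing the two sides of the identity $\1_A^{T} M \1_B = 0$.

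\textbf{Setup.} Let $\mathbf{a}=\1_A$ and $\mathbf{b}=\1_B$. Since there are no edges of $G$ between $A$ and $B$, we have $M_{ij}=0$ for all $i\in A$, $j\in B$. In particular, if some $i\in A\cap B$, then $M_{ii}=0$ as well, since $\{i,i\}$ is not an edge. Therefore
\[
\mathbf{a}^{T}M\mathbf{b}=\sum_{i\in A,\,j\in B}M_{ij}=0 .
\]
Since $M$ is real symmetric with $M\1=\lambda_1\1$, choose an orthonormal eigenbasis $\mathbf{u}_1=\1/\sqrt N,\mathbf{u}_2,\dots,\mathbf{u}_N$. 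Write $\mathbf{a}=\sum_i\alpha_i\mathbf{u}_i$ and $\mathbf{b}=\sum_i\beta_i\mathbf{u}_i$. Put $\alpha=|A|/N$ and $\beta=|B|/N$. Then
\[
\alpha_1=|A|/\sqrt N,\qquad \sum_i\alpha_i^2=|A|,
\]
and the same holds for $\mathbf{b}$ with $\beta$ in place of $\alpha$.

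\textbf{Main estimate.} From $0=\sum_i\lambda_i\alpha_i\beta_i$ we get
\[
\lambda_1\alpha_1\beta_1=-\sum_{i\ge2}\lambda_i\alpha_i\beta_i\le \mu\sum_{i\ge2}|\alpha_i||\beta_i|.
\]
Applying Cauchy--Schwarz to the right-hand side gives
\[
\lambda_1\frac{|A||B|}{N}\le \mu\sqrt{\Big(|A|-\frac{|A|^2}{N}\Big)\Big(|B|-\frac{|B|^2}{N}\Big)}.
\]
Dividing by $N$ and writing everything in terms of $\alpha,\beta$, this becomes
\[
\lambda\,\alpha\beta\le \mu\sqrt{\alpha\beta(1-\alpha)(1-\beta)}.
\]
We may assume $\alpha\beta>0$, since otherwise the theorem is trivial. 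Squaring and dividing by $\alpha\beta$ yields
\[
\lambda^2\alpha\beta\le \mu^2(1-\alpha)(1-\beta).
\]

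\textbf{Optimization step.} By AM--GM, $(1-\alpha)(1-\beta)\le \bigl(1-\tfrac{\alpha+\beta}{2}\bigr)^2\le(1-\sqrt{\alpha\beta})^2$, where the last inequality uses $\sqrt{\alpha\beta}\le(\alpha+\beta)/2$. Setting $s=\sqrt{\alpha\beta}$, we obtain $\lambda s\le \mu(1-s)$, i.e.
\[
s\le \frac{\mu}{\lambda+\mu}.
\]
Squaring gives $|A||B|/N^2\le(\mu/(\lambda+\mu))^2$, which is the claimed bound.

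The only delicate points are the diagonal entries when $A\cap B\ne\emptyset$, handled in the setup, and the final AM--GM step that converts the product $(1-\alpha)(1-\beta)$ into a bound depending only on $s$. I expect the latter to be the main step; the rest is standard Hoffman-type spectral bookkeeping.
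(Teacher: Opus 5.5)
Your proof is correct and is essentially the paper's argument. The paper cites this bound rather than proving it separately, but the proof of Lemma~\ref{CrossHDboundStability} (with the damping factor set to $\gamma=1$) follows the same route: expand $\1_A^{\mathsf T}M\1_B=0$ in an orthonormal eigenbasis, apply Cauchy--Schwarz to the non-principal coefficients, and finish with $(1-\alpha)(1-\beta)\le(1-\sqrt{\alpha\beta})^2$, which is exactly Lemma~\ref{specialAGinq} in normalized form. Your explicit handling of the diagonal entries when $A\cap B\neq\emptyset$ is a point the paper leaves implicit.
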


The following inequality from the AM--GM inequality will be used in later.

\begin{lemma}\label{specialAGinq}
For all $x_1\geq y_1\geq 0$ and $x_2\geq y_2\geq 0$, we have
$$\sqrt{(x_1-y_1)(x_2-y_2)}\leq \sqrt{x_1x_2}-\sqrt{y_1y_2},$$
where the equality holds if and only if $x_1y_2=x_2y_1$.
\end{lemma}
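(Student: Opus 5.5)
We prove the inequality $\sqrt{(x_1-y_1)(x_2-y_2)}\leq \sqrt{x_1x_2}-\sqrt{y_1y_2}$ by squaring. Both sides are nonnegative: the left side trivially, and the right side because $x_1x_2\geq y_1y_2\geq 0$ follows from $x_i\geq y_i\geq 0$. Hence the inequality is equivalent to the one between the squares,
\[
(x_1-y_1)(x_2-y_2)\leq x_1x_2+y_1y_2-2\sqrt{x_1x_2y_1y_2}.
\]
Expanding the left side as $x_1x_2-x_1y_2-x_2y_1+y_1y_2$ and cancelling the common terms, this reduces to
\[
2\sqrt{(x_1y_2)(x_2y_1)}\leq x_1y_2+x_2y_1 .
\]
This is exactly the AM--GM inequality applied to the two nonnegative numbers $a=x_1y_2$ and $b=x_2y_1$. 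Equivalently, it is $(\sqrt{a}-\sqrt{b})^2\geq 0$.

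For the equality case, note that each step above is a reversible equivalence, since both sides of the original inequality are nonnegative and squaring preserves the ordering of nonnegative reals. Equality therefore holds in the original inequality if and only if it holds in AM--GM. That happens if and only if $\sqrt{a}=\sqrt{b}$, i.e. $x_1y_2=x_2y_1$.

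The only point needing care is the nonnegativity of $\sqrt{x_1x_2}-\sqrt{y_1y_2}$, which justifies both the squaring step and the reversibility used in the equality characterization. It holds directly from $x_1\geq y_1\geq 0$ and $x_2\geq y_2\geq 0$, so the lemma involves no real obstacle.
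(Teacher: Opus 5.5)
Your proof is correct and is essentially the paper's argument: expand $(x_1-y_1)(x_2-y_2)$, bound the cross terms $x_1y_2+x_2y_1$ below by $2\sqrt{x_1x_2y_1y_2}$ via AM--GM, and recognize $(\sqrt{x_1x_2}-\sqrt{y_1y_2})^2$, with equality exactly when $x_1y_2=x_2y_1$. You also state explicitly that $\sqrt{x_1x_2}-\sqrt{y_1y_2}\geq 0$, which is needed to take square roots and for the equality characterization; the paper leaves this step implicit.
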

\begin{proof}
Using the AM--GM inequality, we have
\begin{align*}
(x_1-y_1)(x_2-y_2)=&x_1x_2 +y_1y_2- (x_1y_2+x_2 y_1)\\
\leq &x_1x_2 +y_1y_2-2\sqrt{x_1x_2y_1y_2}
= (\sqrt{x_1x_2}-\sqrt{y_1y_2})^2
\end{align*}
where the equality holds if and only if $x_1y_2=x_2y_1$, as desired.
\end{proof}

Usually, we write $\1_A\in \RR^S$ for the {\em characteristic vector} of a subset $A$  of the set $S$, that is, the vector whose $u$-entry is $1$ if $u\in A$ and $0$ otherwise.
We next give a stability result for Theorem~\ref{CrossHDbound}, where the basic idea of the proof comes from \cite[Lemma 2.1]{Berger} by Berger and Zhao.

\begin{lemma}\label{CrossHDboundStability}
Let $M$ be a pseudo adjacency matrix of an $N$-vertex graph $G$
with eigenvalues $\lambda_1=\lambda(M), \lambda_2, \ldots , \lambda_N$ and a corresponding real orthonormal eigenbasis
$\v_1,\v_2, \ldots ,\v_N$, where $\v_1=N^{-1/2}\1$.
Assume that $\mu(M)>0$.
Suppose that $A$ and $B$ are sets of vertices in $G$ such
that there are no edges of $G$ between $A$ and $B$.
For any $\epsilon \in [0,1)$ and $\xi\in (0,1]$,
if
$$ |A||B| \geq (1-\epsilon)\left(\frac{\mu (M)}{\lambda (M)+\mu (M)}\right)^2N^2,$$
then the projection $P_{\1_A}$ of $\1_A$ onto the eigenspace spanned by eigenvectors $\{\v_i: 2\leq i\leq N ~with~ |\lambda_i|\leq (1-\xi)\mu(M) \}$
satisfies
satisfies
$$||P_{\1_A}||^2\leq
\frac{\big(\lambda(M) + \mu(M)\big)^2 }{ \xi(2-\xi)\lambda(M) ^2} \epsilon  N.$$
\end{lemma}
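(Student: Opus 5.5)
Write $\lambda=\lambda(M)$, $\mu=\mu(M)$, and expand the characteristic vectors in the orthonormal eigenbasis: $\1_A=\sum_i a_i\v_i$ and $\1_B=\sum_i b_i\v_i$. Then $a_1=|A|/\sqrt N$, $b_1=|B|/\sqrt N$, and $\sum_{i\ge2}a_i^2=|A|-|A|^2/N=:\alpha$, $\sum_{i\ge 2}b_i^2=|B|-|B|^2/N=:\beta$. Since there are no edges between $A$ and $B$ and $M$ is supported on edges, $\1_A^{\top}M\1_B=0$, which gives
\[
\lambda\frac{|A||B|}{N}=-\sum_{i\ge2}\lambda_i a_ib_i\le \sum_{i\ge 2}|\lambda_i||a_i||b_i| .
\]
Let $I=\{i\ge2:|\lambda_i|\le(1-\xi)\mu\}$, so that $p:=\|P_{\1_A}\|^2=\sum_{i\in I}a_i^2$.

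\textbf{Splitting the spectrum.} I split the right-hand side according to $I$. This gives the bound
$\mu\big(\|a_{I^c}\|\|b_{I^c}\|+(1-\xi)\|a_I\|\|b_I\|\big)$, where $I^c=\{2,\dots,N\}\setminus I$. Applying Cauchy--Schwarz to the two-dimensional vectors $(\|a_{I^c}\|,(1-\xi)\|a_I\|)$ and $(\|b_{I^c}\|,\|b_I\|)$ yields
\[
\lambda\frac{|A||B|}{N}\le \mu\sqrt{\big(\alpha-\xi(2-\xi)p\big)\beta},
\]
using $1-(1-\xi)^2=\xi(2-\xi)$. This is exactly where the factor $\xi(2-\xi)$ in the statement comes from. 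Setting $p=0$ here recovers Theorem~\ref{CrossHDbound}. Indeed, Lemma~\ref{specialAGinq} with $x_1=|A|$, $y_1=|A|^2/N$, $x_2=|B|$, $y_2=|B|^2/N$ gives $\sqrt{\alpha\beta}\le s-s^2/N$, where $s=\sqrt{|A||B|}$, and hence $(\lambda+\mu)s^2/N\le \mu s$.

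\textbf{Combining with the hypothesis.} Square the displayed inequality and use $\alpha\beta\le (s-s^2/N)^2$. Writing $c=\xi(2-\xi)$, this gives
\[
c\,p\,\beta\,\mu^2\le \mu^2\Big(s-\frac{s^2}{N}\Big)^2-\lambda^2\frac{s^4}{N^2}
=\Big(\mu s-(\lambda+\mu)\frac{s^2}{N}\Big)\Big(\mu s+(\mu-\lambda)\frac{s^2}{N}\Big).
\]
Parametrize $s=\rho N\mu/(\lambda+\mu)$. Theorem~\ref{CrossHDbound} gives $\rho\le1$, and the hypothesis gives $\rho\ge\sqrt{1-\epsilon}$. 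The first factor then equals $\mu s(1-\rho)\le \mu s\,\epsilon$, since $1-\sqrt{1-\epsilon}\le\epsilon$. The second factor is at most $\mu s+\mu s^2/N\le 2\mu s$, or more precisely $\mu s\big(1+(\mu-\lambda)\mu/(\lambda+\mu)^2\big)$. It remains to divide by $c\mu^2\beta$ and compare with the target $\frac{(\lambda+\mu)^2}{c\lambda^2}\epsilon N$.

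\textbf{Main obstacle.} The hard step is this final division, because $\beta=|B|(1-|B|/N)$ must be bounded below in terms of $s$. We know $|B|\ge s^2/N$ since $|A|\le N$. For the factor $1-|B|/N$ I would use the asymmetric consequence of Theorem~\ref{CrossHDbound}: since $|A||B|$ is near-maximal, neither family can be almost everything. Concretely, I would first try to avoid dividing by $\beta$ altogether by keeping the weighted form. From $\lambda s^2/N\le\mu\sqrt{(\alpha-cp)\beta}$ and $\beta\le |B|$, $\alpha\le|A|-|A|^2/N$, one gets $c p\le \alpha-\lambda^2s^4/(\mu^2N^2\beta)$. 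Then I would apply Lemma~\ref{specialAGinq} once more, in the form $\alpha\beta\le(s-s^2/N)^2$, to show $\alpha-\lambda^2s^4/(\mu^2N^2\beta)\le \frac{(\lambda+\mu)^2}{\lambda^2}\epsilon N$ after substituting $s=\rho N\mu/(\lambda+\mu)$. Checking that the constants land exactly on $(\lambda+\mu)^2/\lambda^2$ is the computation I expect to require the most care.
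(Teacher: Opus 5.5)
Your setup is the paper's: the same expansion, the same identity $\lambda|A||B|/N=-\sum_{i\ge2}\lambda_ia_ib_i$, and the same bound $\lambda s^2/N\le\mu\sqrt{(\alpha-cp)\beta}$ with $c=\xi(2-\xi)$. Your two-dimensional Cauchy--Schwarz is exactly the paper's Cauchy--Schwarz applied to the rescaled coefficients $\widehat a_i$, which equal $(1-\xi)a_i$ for $i\in I$.

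The gap is the endgame. You leave it as a plan, and your first version of it does not reach the stated constant. There is also a sign slip in your factorization: the second factor is $\mu s+(\lambda-\mu)s^2/N$, not $\mu s+(\mu-\lambda)s^2/N$. It is still at most $2\mu s$, because $s/N\le\mu/(\lambda+\mu)$. Dividing by $\beta$ then forces you to bound $s^2/\beta=|A|/(1-|B|/N)$. The natural estimate comes from $\lambda^2|A||B|\le\mu^2(N-|A|)(N-|B|)$ together with $|B|\ge(1-\epsilon)\mu^2N/(\lambda+\mu)^2$. It gives only $\frac{(\lambda+\mu)^2}{(1-\epsilon)\lambda^2}N$, hence only $cp\le\frac{2\epsilon}{1-\epsilon}\cdot\frac{(\lambda+\mu)^2}{\lambda^2}N$, which is weaker than the statement and degenerates as $\epsilon\to1$.

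Your second route does close, but the ingredient that closes it is missing from the proposal. The facts $\beta\le|B|$ and $\alpha\le|A|-|A|^2/N$ that you cite play no role. What is needed is the paper's key move: use $\alpha\le|A|\le N$ to turn the additive deficit into a multiplicative one. The paper does this at once, writing $\alpha-cp\le(1-cp/N)\alpha=:\gamma^2\alpha$. The Hoffman computation then reruns with $\mu$ replaced by $\gamma\mu$ and gives $s/N\le\gamma\mu/(\lambda+\gamma\mu)$.

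In your formulation, note first that $\alpha,\beta>0$ because $s>0$. Then use $\alpha\beta\le(s-s^2/N)^2$, which only enlarges the bracket below; the bracket is nonnegative, being at least $cp/\alpha$. Finally use $\alpha\le N$:
\[
cp\le\alpha\Bigl(1-\frac{\lambda^2s^4}{\mu^2N^2\alpha\beta}\Bigr)\le N\Bigl(1-\frac{\lambda^2s^2}{\mu^2(N-s)^2}\Bigr).
\]
With $s=\rho N\mu/(\lambda+\mu)$ one has $\lambda s/(\mu(N-s))=\rho\lambda/(\lambda+(1-\rho)\mu)$, and
\[
1-\frac{\rho^2\lambda^2}{(\lambda+(1-\rho)\mu)^2}
=\frac{(1-\rho)(\lambda+\mu)\bigl((1+\rho)\lambda+(1-\rho)\mu\bigr)}{(\lambda+(1-\rho)\mu)^2}
\le\frac{(1-\rho^2)(\lambda+\mu)}{\lambda}\le\frac{\lambda+\mu}{\lambda}\,\epsilon .
\]
This uses $(1+\rho)\lambda+(1-\rho)\mu\le(1+\rho)(\lambda+(1-\rho)\mu)$, $\lambda+(1-\rho)\mu\ge\lambda$, and $1-\rho^2\le\epsilon$. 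Hence $\xi(2-\xi)\|P_{\1_A}\|^2\le\frac{\lambda+\mu}{\lambda}\epsilon N$, which is at most the stated $\frac{(\lambda+\mu)^2}{\lambda^2}\epsilon N$.
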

\begin{proof}
Set $\mu=\mu(M)$, $\lambda=\lambda_1$, $\1_A=\sum_{i=1}^N a_i\v_i$, and $\1_B=\sum_{i=1}^N b_i\v_i$.
The assumptions $\mu>0$ and $\epsilon<1$ imply that $|A||B|>0$.
Observe that
$$ ||P_{\1_A}||^2=\sum_{i>1: |\lambda_i|\leq (1-\xi)\mu } a_i^2=:D.$$
For $2\leq i\leq N$, write
$$\widehat{a}_i=
\left\{
\begin{array}{ll}
a_i, &  \mbox{if~} |\lambda_i|>(1-\xi) \mu ;\\
(1-\xi)a_i, & \mbox{otherwise}.
\end{array}\right.$$
Then one may check that
$$\sum_{ i>1} |\widehat{a}_i|^2=
\left(\sum_{ i>1: |\lambda_i|>(1-\xi) \mu  }  |a_i|^2\right)+
\left(\sum_{ i>1: |\lambda_i|\leq(1-\xi) \mu  }  |a_i|^2 (1-\xi)^2\right)
=\left(\sum_{ i>1}|a_i|^2\right) - D \xi(2-\xi).$$
Note that $\sum_{ i>1}|a_i|^2 \leq \sum_{ i=1}^N|a_i|^2=|A|\leq N$,
so we have
\begin{equation}\label{Hfmanstabilityeq111}
\sum_{ i>1} |\widehat{a}_i|^2\leq \left(\sum_{ i>1}|a_i|^2\right) - \frac{\sum_{ i>1}|a_i|^2}{N} D \xi(2-\xi)=
\left(1-\frac{D \xi(2-\xi)}{N}\right)\sum_{ i>1}|a_i|^2.
\end{equation}

Since there are no edges between $A$ and $B$, we get
$$0=\1_A^{\mathsf T}M\1_B=\left(\sum_{i=1}^N a_i\v_i\right)^{\!\mathsf T}
\left(M\sum_{i=1}^N b_i \v_i\right)
=\sum_{i=1}^N a_ib_i \lambda_i,$$
which implies that
\begin{align*}
a_1b_1\lambda & = -\sum_{i=2}^N a_ib_i \lambda_i
\leq \left(\sum_{ i>1: |\lambda_i|>(1-\xi) \mu  }  |a_i||b_i|\mu\right) +
\left(\sum_{ i>1: |\lambda_i|\leq(1-\xi) \mu  }  |a_i||b_i| (1-\xi) \mu\right) \\
&= \sum_{ i>1} |\widehat{a}_i||b_i|   \mu
\leq  \mu  \left(\sum_{ i>1} |\widehat{a}_i|^2\right)^{\frac{1}{2}}\left(\sum_{ i>1} |b_i|^2\right)^{\frac{1}{2}},
\end{align*}
where the last inequality follows from the Cauchy--Schwarz inequality.
Set $\gamma^2= 1-\frac{D \xi(2-\xi)}{N} $.
Since $0\leq D\leq |A|\leq N$ and $0<\xi(2-\xi)\leq1$,
we have $0\leq\gamma\leq1$.
Applying \eqref{Hfmanstabilityeq111} to the preceding inequality, we deduce that
\begin{equation}\label{Hfmanstabilityeq222}
a_1b_1\lambda
\leq   \gamma \mu  
\left(\sum_{ i>1} |a_i|^2\right)^{\frac{1}{2}}\left(\sum_{ i>1} |b_i|^2\right)^{\frac{1}{2}} .
\end{equation}
Observe that
$$a_1=\big\langle \1_A,\1\big/\sqrt{N}\big\rangle=|A| \big/\sqrt{N},
~ \left(\sum_{ i=1}^N |a_i|^2\right)-|a_1|^2=|A|-|A|^2\big/ N,$$
$$b_1=\big\langle \1_B,\1\big/\sqrt{N}\big\rangle=|B| \big/\sqrt{N},
~{\rm and~} \left(\sum_{ i=1}^N |b_i|^2\right)-|b_1|^2=|B|-|B|^2\big/ N.$$
Substituting these terms into \eqref{Hfmanstabilityeq222} and applying Lemma \ref{specialAGinq}, we get that
\begin{equation}\label{Hfmanstabilityeq333}
\frac{|A||B|}{N} \lambda
\leq \gamma  \mu
\left(|A|-|A|^2 / N \right)^{\frac{1}{2}}
\left(|B|-|B|^2 / N \right)^{\frac{1}{2}}
\leq  \gamma \mu
\left( \big(|A||B|\big)^{\frac{1}{2}}- \frac{|A||B|}{N} \right),
\end{equation}
which implies that
$$
\frac{|A||B|}{N^2}
\leq \left(\frac{\gamma\mu }{\lambda  +\gamma \mu }\right)^2.
$$
Combining this inequality and the assumption,
one may calculate that
\begin{align*}
\epsilon \left(\frac{\mu }{\lambda  +\mu }\right)^2
\geq &\left(\frac{ \mu  }{\lambda  +  \mu }\right)^2
-\left(\frac{\gamma\mu }{\lambda  +\gamma \mu  }\right)^2
=\frac{ \mu^2 \big(\lambda  +  \gamma\mu \big)^2
- \gamma^2\mu^2 (\lambda  +  \mu)^2}
{\big(\lambda +  \mu  \big)^2\big(\lambda  +  \gamma\mu  \big)^2}
\\
=&\frac{\lambda^2\mu^2\big(1-\gamma ^2\big)+2\lambda\mu^3(\gamma-\gamma ^2)}
{\big(\lambda  +  \mu  \big)^2\big(\lambda  +  \gamma\mu  \big)^2}
\geq \frac{ \lambda^2\mu^2\big(1- \gamma ^2\big) }
{\big(\lambda  +  \mu  \big)^4}=\frac{ \lambda^2\mu^2 }
{\big(\lambda  +  \mu  \big)^4}\frac{D \xi(2-\xi)}{N}.
\end{align*}
Simplifying this inequality, we obtain
$$\frac{D }{N} \leq
\frac{\big(\lambda +  \mu  \big)^2 }{ \xi(2-\xi)\lambda ^2} \epsilon,$$
as desired.
\end{proof}

\section{Proof of Theorem \ref{EKRcrossstabilitythm}}

The aim of this section is to prove Theorem \ref{EKRcrossstabilitythm}.
The proof is based on an analytic result on Boolean functions by Filmus \cite{Filmus} and Lemma \ref{CrossHDboundStability}.

For a  family  $\A \subseteq \binom{[n]}{k}$, its {\em characteristic function}
$f_\A:\binom{[n]}{k}\to \{0,1\}$ is a Boolean function such that
$f_\A(A)$ is $1$ if $A\in\A$ and $0$ otherwise.
It will be convenient to identify $\A$, $\1_\A$, and $f_\A$ if there is no ambiguity.
A function $f :\binom{[n]}{k}\to \{0,1\}$ is said to be {\em affine}
if it can be written as 
$$c_0 +\sum_{i=1}^n c_i f_{\S_i}$$
for some constants $c_i$, $1\leq i \leq n$, where
$\S_i=\{S\in \binom{[n]}{k}: i\in S\}$ is the star.
Two functions $f,g:\binom{[n]}{k} \to \{0, 1\}$ are said to be
{\em $\epsilon$-close} if
$$||f-g||^2=\sum_{S \in \binom{[n]}{k}} \big(f(S)-g(S)\big)^2
\leq \epsilon \binom{n}{k}.$$
Clearly, if the characteristic functions of two families
$\A, \B\subseteq \binom{[n]}{k}$ are $\epsilon$-close, then their symmetric difference satisfies $|\A \Delta \B|\leq \epsilon \binom{n}{k}.$

Filmus \cite[Theorem 3.1]{Filmus} proved an analogous theorem of the Friedgut--Kalai--Naor theorem \cite{Friedgut} on the slice $\binom{[n]}{k}$.
Our proof of Theorem \ref{EKRcrossstabilitythm} relies on the following special case (cf. \cite[Theorem~6]{Das}) of Filmus's result.
Using it,
Das and Tran~\cite[Theorem 2]{Das} presented a removal lemma showing that large families with few disjoint pairs must be close to a union of stars.
For more related results and background, we refer to~\cite{Filmus,FilmusCPC}.

\begin{theorem}[Filmus \cite{Filmus}]\label{FNKthninkslice}
There exists a constant $C>1$ such that the following holds:
Suppose that $2\leq k\leq n/2$ and $\varepsilon<  \frac{k}{128n}$.
If $f:\binom{[n]}{k}\to \{0,1\}$ is $\varepsilon$-close to an affine function,
then there exists $S\subseteq [n]$ of size
$|S|\leq \max\left\{1, C \sqrt{\varepsilon}n/k \right\}$ such that either $f$ or $1-f$ is $(C\varepsilon)$-close to $\max_{i\in S} x_i$,
where $\max_{i\in \emptyset} x_i=0$ and $\max_{i\in S} x_i$ denotes the characteristic function of
the union of stars $\S_i$ with $i\in S$.
\end{theorem}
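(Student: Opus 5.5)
Since this is Filmus's slice analogue of the Friedgut--Kalai--Naor theorem, I would prove it the way the cube FKN theorem is proved, replacing independent coordinates by transpositions of coordinates. Write $f\approx g=c_0+\sum_i c_i x_i$ with $\|f-g\|^2\le\varepsilon\binom nk$. On the slice $\sum_i x_i=k$, so the representation of $g$ is not unique. I would first normalize it by shifting all $c_i$ by a common constant and compensating in $c_0$; the natural choice is to make the median of the $c_i$ equal to $0$. I would also assume that $g$ is the orthogonal projection of $f$ onto the span of the constant and degree-one (first level) functions of the Johnson scheme. The aim is to show that, after this normalization, almost all coefficients are close to a single value. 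The exceptional coefficients should then all be near $+1$ or all near $-1$, with $c_0$ near $0$ or near $1$ respectively.

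\textbf{Step 1 (local Boolean constraint).} For a pair $i\ne j$, let $S^{(ij)}$ denote $S$ with the roles of $i$ and $j$ swapped. On sets with $i\in S$, $j\notin S$ we have $g(S)-g(S^{(ij)})=c_i-c_j$. Meanwhile $f(S)-f(S^{(ij)})\in\{-1,0,1\}$ for every such $S$. Averaging over $S$ and using $\|f-g\|^2\le\varepsilon\binom nk$ gives a bound on $\sum_{i,j}\operatorname{dist}(c_i-c_j,\{-1,0,1\})^2$ of order $\varepsilon n^2/(k(n-k))\cdot n$ in total. Hence most pairwise differences lie near $\{-1,0,1\}$. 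A similar argument applied to $g^2-g\approx 0$ controls $c_0$.

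\textbf{Step 2 (rounding the coefficients).} From Step 1 and the median normalization, the coefficients split into a bulk near $0$ and an exceptional set $S$ whose coefficients are near a common value $\sigma\in\{+1,-1\}$. Coefficients near $+1$ and near $-1$ cannot both occur, because their difference would be about $2$. If $\sigma=-1$, I would replace $f$ by $1-f$, which explains the ``$f$ or $1-f$'' alternative in the statement. This reduces the problem to the case where $f$ is close to $c_0+\sum_{i\in S}x_i$ with $c_0\approx 0$.

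\textbf{Step 3 (from a sum of stars to a union of stars, and bounding $|S|$).} Since $f$ is Boolean, it is at least as close to the union $\max_{i\in S}x_i$ as to the sum. The discrepancy $\sum_{i\in S}x_i-\max_{i\in S}x_i$ is supported on sets meeting $S$ at least twice, and its squared norm is of order $\binom{|S|}{2}(k/n)^2\binom nk$.

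Two estimates follow from this. First, $\max_{i\in S}x_i$ has a degree-two component of norm of the same order as the discrepancy. That component would have to be absorbed into the error $\varepsilon$, since $f$ is $\varepsilon$-close to a degree-one function. This forces $|S|^2k^2/n^2\lesssim\varepsilon$, that is, $|S|\le C\sqrt\varepsilon\, n/k$; the alternative $|S|\le1$ covers the regime where this bound is below $1$. Second, in the same regime the discrepancy itself is $O(\varepsilon)\binom nk$, giving $(C\varepsilon)$-closeness to $\max_{i\in S}x_i$. The hypothesis $\varepsilon<k/(128n)$ is what keeps $|S|k/n$ small enough for these expansions to hold.

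\textbf{Main obstacle.} I expect Step 2 to be the hardest part: turning the averaged pairwise constraints into a genuine clustering with the correct linear dependence on $\varepsilon$. The swap operators on the slice do not commute the way coordinate flips on the cube do. To handle this, I would first work with the spectral decomposition of the Johnson scheme. I would express $\|f-g\|^2$ via the level-one and level-two weights and use hypercontractivity, or the log-Sobolev inequality on the slice, to show that the level-two weight of $g^2$ controls the variance of the $c_i$ around $\{0,\sigma\}$. As a fallback, I would try to reduce to the cube FKN theorem by coupling a random $k$-set with a product measure of density $k/n$.
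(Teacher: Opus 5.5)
The paper does not prove this statement. It quotes it from Filmus, in the special form recorded by Das and Tran, and uses it as a black box, so your outline has to stand on its own. It does not, and the gap is in Step~1 rather than where you put it. For a fixed ordered pair $(i,j)$ the swap argument gives
\[
\operatorname{dist}(c_i-c_j,\{-1,0,1\})^2\le \frac{2}{\binom{n-2}{k-1}}\sum_{T:\,|T\cap\{i,j\}|=1}(f-g)(T)^2 .
\]
Each $k$-set $T$ occurs on the right for $2k(n-k)$ ordered pairs. Summing therefore gives only $\sum_{i\neq j}\operatorname{dist}(c_i-c_j,\{-1,0,1\})^2\le 4n(n-1)\varepsilon$, which is weaker by a factor of order $k$ than the $\varepsilon n^3/(k(n-k))$ you state.

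With the correct bound, the deviations $\delta_i$ of the coefficients from their cluster values satisfy only $\sum_i(\delta_i-\bar\delta)^2=O(\varepsilon n)$. On the slice, $\sum_i\delta_ix_i$ has variance $\frac{k(n-k)}{n(n-1)}\sum_i(\delta_i-\bar\delta)^2$, so rounding costs $O(k\varepsilon)\binom nk$. You would therefore get $O(k\varepsilon)$-closeness, not $C\varepsilon$-closeness with an absolute $C$. This loss is intrinsic to local constraints: on the cube the one-coordinate analogue gives only $\sum_i\operatorname{dist}(c_i,\{-1,0,1\})^2=O(n\varepsilon)$. Ruling out an affine function with many small coefficients needs a global anti-concentration input, and that is the real content of FKN. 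By contrast, the clustering you call the main obstacle is easy. The per-pair bound above gives $\operatorname{dist}(c_i-c_j,\{-1,0,1\})^2\le 2\varepsilon\,n(n-1)/(k(n-k))<1/32$ for every pair when $\varepsilon<k/(128n)$. That already forces at most two clusters about $1$ apart.

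The remedies you list are not carried out, and the first is ill-suited here. Degree-one functions on the slice are not uniformly hypercontractive as $k/n\to0$: already $x_1-\frac kn\in E_1$ has ratio of fourth moment to squared second moment about $n/k$. An FKN-style fourth-moment argument would therefore pick up factors of $n/k$. The product-measure coupling would need an extension of $f$ off the slice that stays close to affine, which you do not have.

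What does work is to embed uniform cubes in the slice. Since $k\le n/2$, choose distinct $i_1,j_1,\dots,i_k,j_k$ uniformly at random. Set $x_{i_t}=y_t$, $x_{j_t}=1-y_t$, and all other coordinates $0$. For uniform $y\in\{0,1\}^k$ this gives a uniform $k$-set, and $g$ becomes affine in $y$ with coefficients $d_t=c_{i_t}-c_{j_t}$. Let $\eta_M$ be the mean squared distance between $f$ and $g$ on that cube, so $\mathbb{E}_M\eta_M\le\varepsilon$. The classical FKN theorem on each cube gives $\sum_t\operatorname{dist}(d_t,\{-1,0,1\})^2=O(\eta_M)$. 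Averaging over the matching yields $\sum_{i\neq j}\operatorname{dist}(c_i-c_j,\{-1,0,1\})^2=O(\varepsilon n^2/k)$, which is exactly the order you asserted and what Step~2 needs.

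Finally, in Step~3 bound $|S|$ directly. The rounded function is at least $2$ on sets meeting $S$ twice while $f$ is Boolean, so $\Pr[|T\cap S|\ge2]=O(\varepsilon)$. Your level-two-weight argument instead assumes that $|S|k/n$ is small, which is what it is meant to prove. It also fails when $|S|k/n$ is large, since that weight is then exponentially small.
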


The following lemma will be used in the proof of Theorem
\ref{EKRcrossstabilitythm}.

\begin{lemma}\label{crossstarlemma}
Let $n \geq 2k$ and $k\geq 2$, and let $\mathcal{S}_i=\{S\in \binom{[n]}{k}: i\in S\}$.
If $i\neq j$ and $\A,\B\subseteq \binom{[n]}{k}$ are two cross-intersecting families, then
$$|\A\cap \mathcal{S}_i|  |\B\cap \mathcal{S}_j|
\leq \binom{n-2}{k-2}\binom{n-1}{k-1}+\binom{n-3}{k-2}^2.$$
\end{lemma}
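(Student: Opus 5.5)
The plan is to split each of the two restricted families according to the \emph{other} distinguished element. Then the part containing both $i$ and $j$ is bounded trivially, and the remaining parts form a cross-intersecting pair on a smaller ground set, to which Theorem~\ref{EKRcross} applies. Write
\[
\A\cap\S_i=\A_{ij}\cup\A_{i\bar j},\qquad \B\cap\S_j=\B_{ij}\cup\B_{\bar i j},
\]
where $\A_{ij}$ (resp. $\B_{ij}$) consists of the members containing both $i$ and $j$. Likewise $\A_{i\bar j}$ consists of the members of $\A$ containing $i$ but not $j$, and $\B_{\bar i j}$ of the members of $\B$ containing $j$ but not $i$. Set
\[
a=\binom{n-2}{k-2},\quad m=n-2,\quad r=k-1,\quad x=|\A_{i\bar j}|,\quad y=|\B_{\bar i j}|.
\]
Trivially $|\A_{ij}|,|\B_{ij}|\leq a$, so $|\A\cap\S_i||\B\cap\S_j|\leq (a+x)(a+y)=a^2+a(x+y)+xy$.

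Next, put $\A'=\{A\setminus\{i\}:A\in\A_{i\bar j}\}$ and $\B'=\{B\setminus\{j\}:B\in\B_{\bar i j}\}$. Both are $r$-uniform families on the $m$-element ground set $[n]\setminus\{i,j\}$. For $A\in\A_{i\bar j}$ and $B\in\B_{\bar i j}$ we have $i\notin B$ and $j\notin A$, so $A\cap B$ avoids $\{i,j\}$ and is nonempty. Hence $\A'$ and $\B'$ are cross-intersecting. Since $n\geq 2k$ gives $m\geq 2r$, Theorem~\ref{EKRcross} yields
\[
xy\leq\binom{m-1}{r-1}^2=\binom{n-3}{k-2}^2 .
\]
It then remains to show $a^2+a(x+y)\leq a\binom{n-1}{k-1}$. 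Since $\binom{n-1}{k-1}=a+\binom{n-2}{k-1}$, this is equivalent to the sum bound
\[
x+y\leq\binom{m}{r}.
\]

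This sum bound for cross-intersecting families is the main obstacle, as it is not among the results quoted earlier. If $y=0$ it is immediate from $x\leq\binom{m}{r}$, and similarly if $x=0$. When both families are nonempty, I would prove it by Katona's cycle method. Fix a cyclic ordering of the $m$ points and consider the $m$ arcs of length $r$. The claim is that if nonempty families $\mathcal{P},\mathcal{Q}$ of such arcs are cross-intersecting and $m\geq 2r$, then $|\mathcal{P}|+|\mathcal{Q}|\leq m$. To see this, fix an arc $Q\in\mathcal{Q}$. Every arc of $\mathcal{P}$ meets $Q$, and the arcs meeting $Q$ can be grouped into $2r-1$ consecutive positions, which pair off in a way that forbids the corresponding arcs in $\mathcal{Q}$. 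A short counting (or induction) argument on these positions then gives $|\mathcal{P}|+|\mathcal{Q}|\leq m$. Finally, I would average over all cyclic orderings in the standard way to obtain $x+y\leq\binom{m}{r}$.

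Alternatively, one could simply cite Hilton's theorem, which states that nonempty cross-intersecting $r$-uniform families on $[m]$ with $m\geq 2r$ satisfy $|\A'|+|\B'|\leq\binom{m}{r}-\binom{m-r}{r}+1\leq\binom{m}{r}$. The degenerate case $k=2$ (so $r=1$) is immediate, since there two cross-intersecting families of singletons coincide as a single point.
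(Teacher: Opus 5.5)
Your proof is correct and matches the paper's argument step for step. You use the same split of $\A\cap\S_i$ and $\B\cap\S_j$ according to the other distinguished point, apply Theorem~\ref{EKRcross} to the cross-intersecting link families on $[n]\setminus\{i,j\}$ to bound the product by $\binom{n-3}{k-2}^2$, and bound the sum by $|\A'|+|\B'|\leq\binom{n-2}{k-1}$. The paper gets that sum bound by citing Hilton's theorem directly rather than sketching a Katona-cycle argument, so your fallback citation is exactly what is used there.
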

\begin{proof}
Write $\mathcal{S}_{i,j}=\{S\in \binom{[n]}{k}: i,j\in S\}$ and $\mathcal{S}_{i,\overline{j}}=
\{S\in \binom{[n]}{k}: i \in S {~\rm and~} j\notin S\}$.
Observe that $\A(i,\overline{j}):=\{A\setminus \{i \} : A\in \A\cap \mathcal{S}_{i,\overline{j}}\}$ and
$\B(j,\overline{i}):=\{B\setminus \{ j\} : B\in \B\cap \mathcal{S}_{j,\overline{i}}\}$ are cross-intersecting families on $\binom{[n]\setminus \{i,j\}}{k-1}$.
Applying Theorem \ref{EKRcross}, we have
$$|\A\cap \mathcal{S}_{i,\overline{j}} |
|\B\cap \mathcal{S}_{j,\overline{i}} |
=|\A(i,\overline{j})|
|\B(j,\overline{i})|
\leq \binom{n-3}{k-2}^2.$$
Recall a theorem of Hilton \cite[Theorem 1]{Hilton} which states that
$|\F|+|\G| \leq \binom{n}{k}$ when $\F,\G\subseteq \binom{[n]}{k}$ are
two cross-intersecting families.
Using it, we get that
$$|\A\cap \mathcal{S}_{i,\overline{j}} | +
|\B\cap \mathcal{S}_{j,\overline{i}} |
=|\A(i,\overline{j})|+|\B(j,\overline{i})|
\leq \binom{n-2}{k-1}.$$
Combining these, we deduce that
\begin{align*}
|\A\cap \mathcal{S}_i|  |\B\cap \mathcal{S}_j|
&=\left(|\A\cap \mathcal{S}_{i,j}|+|\A\cap \mathcal{S}_{i,\overline{j}}|\right) \left(|\B\cap \mathcal{S}_{i,j}|+|\B\cap \mathcal{S}_{j,\overline{i}}|\right) \\
&\leq \binom{n-2}{k-2}^2+ \binom{n-2}{k-2}
\left(|\A\cap \mathcal{S}_{i,\overline{j}} | +
|\B\cap \mathcal{S}_{j,\overline{i}} |\right)
+|\A\cap \mathcal{S}_{i,\overline{j}} |
|\B\cap \mathcal{S}_{j,\overline{i}} |\\
&\leq \binom{n-2}{k-2}^2+\binom{n-2}{k-2}\binom{n-2}{k-1}+ \binom{n-3}{k-2}^2\\
&=\binom{n-2}{k-2}\binom{n-1}{k-1}+\binom{n-3}{k-2}^2,
\end{align*}
as desired.
\end{proof}

Combining Theorem \ref{FNKthninkslice} and Lemma \ref{CrossHDboundStability},
we are now ready to prove Theorem \ref{EKRcrossstabilitythm} which gives
a stability result of Theorem \ref{EKRcross}.

\begin{proof}[{\bf Proof of Theorem \ref{EKRcrossstabilitythm}}]
The statement is trivial for $k=1$.
Given integers $2\leq  k$ and $n\geq 2.07k$,
the {\em Kneser graph} $K(n, k)$ is the graph
on the vertex set $\binom{[n]}{k}$,
where two vertices 
form an edge if and only if their intersection is empty.
Let $A$ be the adjacency matrix of $K(n,k)$.
Lov\'asz \cite{Lovasz} showed the eigenvalues of $A$ are
$$\lambda_i=(-1)^i \binom{n-k-i}{k-i},$$
with multiplicities $\binom{n}{i}-\binom{n}{i-1}$,
where $0\leq i \leq k$ and $\binom{n}{ -1}=0$.
Moreover, the eigenspace $E_0$ of $\lambda_0$ is spanned by $\1$,
and the eigenspace $E_1$ of $\lambda_1$ is spanned by the vectors
$\1_{\S_i}-\frac{k}{n}\1$ with $i\in [n]$, which has dimension $n-1$.

Let $C_0>1$ be the constant in Theorem \ref{FNKthninkslice}
and set $C=64C_0$ which is the required constant in the statement.
Suppose that $\A,\B\subseteq \binom{[n]}{k}$ are two cross-intersecting families such that $|\A||\B|\geq (1-\epsilon)\binom{n-1}{k-1}^2$,
where $\epsilon\leq \frac{k^2}{C^2n^2}$.


\begin{claim}\label{claimprojectionsmall}
The projection $P_{\1_\A}$ of $\1_\A$ onto $(E_0\oplus E_1)^{\bot}$
satisfies
$$||P_{\1_\A}||^2 < \frac{n^2}{ (n-2 )(n-2k )}\epsilon N.$$
\end{claim}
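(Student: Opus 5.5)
The plan is to apply Lemma~\ref{CrossHDboundStability} with $M$ the adjacency matrix of the Kneser graph $K(n,k)$, $N=\binom{n}{k}$, and with $\xi$ chosen so that the eigenspaces for $i\geq 2$ are exactly the ones with $|\lambda_i|\leq (1-\xi)\mu(M)$. Since $\A,\B$ are cross-intersecting, there are no edges of $K(n,k)$ between $\A$ and $\B$. Also $M$ is a genuine pseudo adjacency matrix, because $\1$ is an eigenvector with eigenvalue $\lambda(M)=\binom{n-k}{k}>0$.

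First I identify $\mu(M)$. For $0\leq i<k$ and $n\geq 2k$,
\[
\frac{\binom{n-k-i-1}{k-i-1}}{\binom{n-k-i}{k-i}}=\frac{k-i}{n-k-i}\leq 1,
\]
so $|\lambda_i|=\binom{n-k-i}{k-i}$ is nonincreasing in $i$. Hence
\[
\mu(M)=|\lambda_1|=\binom{n-k-1}{k-1}=\frac{k}{n-k}\binom{n-k}{k}>0
\]
(here $k\geq 2$ and $n>2k$). This gives $\frac{\mu}{\lambda+\mu}=\frac{k}{n}$, and therefore
\[
\Big(\frac{\mu}{\lambda+\mu}\Big)^2N^2=\binom{n-1}{k-1}^2 .
\]
So the hypothesis $|\A||\B|\geq (1-\epsilon)\binom{n-1}{k-1}^2$ is exactly the hypothesis of the lemma, and $\epsilon<1$ holds because $\epsilon\leq k^2/(C^2n^2)$.

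Next I choose $\xi$. For every $i\geq 2$,
\[
|\lambda_i|\leq\binom{n-k-2}{k-2}=\frac{k-1}{n-k-1}\,\mu .
\]
Set $1-\xi=\frac{k-1}{n-k-1}$, that is, $\xi=\frac{n-2k}{n-k-1}\in(0,1]$. Then the span of the $\v_i$ with $i\geq 2$ and $|\lambda_i|\leq(1-\xi)\mu$ contains $E_2\oplus\cdots\oplus E_k=(E_0\oplus E_1)^\perp$. Any vectors of $E_1$ in that span are not a problem, since enlarging the subspace only enlarges the projection. In fact $E_1$ is excluded anyway, because $|\lambda_1|=\mu>(1-\xi)\mu$. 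So $P_{\1_\A}$ in the claim is bounded by the lemma.

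Finally I compute the constant. We have $\frac{(\lambda+\mu)^2}{\lambda^2}=\frac{n^2}{(n-k)^2}$ and
\[
\xi(2-\xi)=1-(1-\xi)^2=\frac{(n-k-1)^2-(k-1)^2}{(n-k-1)^2}=\frac{(n-2)(n-2k)}{(n-k-1)^2}.
\]
Combining these, Lemma~\ref{CrossHDboundStability} yields
\[
\|P_{\1_\A}\|^2\leq \frac{n^2(n-k-1)^2}{(n-k)^2(n-2)(n-2k)}\,\epsilon N<\frac{n^2}{(n-2)(n-2k)}\,\epsilon N,
\]
using $(n-k-1)^2<(n-k)^2$ and $\epsilon>0$. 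In the degenerate case $\epsilon=0$ the same argument gives projection $0$, which is the intended content.

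There is no real obstacle here. The only points needing care are the monotonicity of $|\lambda_i|$, so that $\mu=|\lambda_1|$ and $\lambda_2$ is the largest of the remaining eigenvalues in absolute value, and the exact choice of $\xi$ that makes the factor $(n-2)(n-2k)$ appear.
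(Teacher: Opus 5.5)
Your proposal is correct and follows the paper's proof essentially step for step. You apply Lemma~\ref{CrossHDboundStability} to the Kneser adjacency matrix with $\mu=|\lambda_1|$, choose $\xi=\frac{n-2k}{n-k-1}$ so that $(1-\xi)|\lambda_1|=|\lambda_2|$, and simplify via $\xi(2-\xi)=\frac{(n-2)(n-2k)}{(n-k-1)^2}$. Your extra checks are points the paper leaves implicit: the monotonicity of $|\lambda_i|$, that $\epsilon<1$, and the remark that the strict inequality degenerates to $0<0$ when $\epsilon=0$.
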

\begin{proof}
Since $\A,\B$ are cross-intersecting,
there are no edges of $K(n, k)$ between them.
Observe that $\lambda(A)=\lambda_0=\binom{n-k }{k}$ and
$\mu(A)=|\lambda_1|= \binom{n-k-1}{k-1}$,
so we have
$$ \left(\frac{\mu (A)}{\lambda (A)+\mu (A)}\right)^2N^2=\binom{n-1}{k-1}^2.$$
Combining these and the hypothesis,
we may apply Lemma \ref{CrossHDboundStability} to $A$ as follows.
Note that
$|\lambda_2|=\max_{2\leq i\leq k} |\lambda_i|=\binom{n-k-2}{k-2}$.
We choose $\xi$ such that $(1-\xi)|\lambda_1|= |\lambda_2|,$ that is, $\xi=\frac{ n-2k }{n-k-1}$.
Then Lemma \ref{CrossHDboundStability} implies that
$$||P_{\1_\A}||^2 \leq \frac{\big(1 +  \mu(A)/ \lambda(A) \big)^2 }
{ \xi(2-\xi) } \epsilon N
=\left(\frac{n}{n-k}\right)^2 \frac{(n-k-1)^2}{(n-2)(n-2k)} \epsilon N
< \frac{n^2}{ (n-2 )(n-2k )}\epsilon N,$$
as desired.
\end{proof}

\begin{claim}\label{claimcloseaffine}
The characteristic function $f_\A$ of $\A$ is $64\epsilon$-close to an affine function.
\end{claim}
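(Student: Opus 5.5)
The natural affine candidate is the projection of $\1_\A$ onto $E_0\oplus E_1$. Since $E_0$ is spanned by $\1$ and $E_1$ by the vectors $\1_{\S_i}-\frac{k}{n}\1$, every vector in $E_0\oplus E_1$ has the form $c_0\1+\sum_{i=1}^n c_i \1_{\S_i}$, so it is an affine function in the sense above. Write $g$ for this projection, so that $\1_\A-g=P_{\1_\A}$ is exactly the component of $\1_\A$ orthogonal to $E_0\oplus E_1$. Then
$$\|f_\A-g\|^2=\|P_{\1_\A}\|^2,$$
and by Claim~\ref{claimprojectionsmall} this is less than $\frac{n^2}{(n-2)(n-2k)}\epsilon N$ with $N=\binom{n}{k}$. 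It therefore suffices to show
$$\frac{n^2}{(n-2)(n-2k)}\le 64$$
under the hypothesis $n\ge 2.07k$ with $k\ge 2$.

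This is a routine estimate. Since $n\ge 2.07k$, we have $n-2k\ge n-\frac{2n}{2.07}=\frac{0.07}{2.07}n$, so $\frac{n}{n-2k}\le \frac{2.07}{0.07}\approx 29.6$. Since $n\ge 2.07\cdot 2>4$, we also have $\frac{n}{n-2}\le 2$ (in fact $n\ge 5$ gives $\frac{n}{n-2}\le \frac53$). Multiplying, the product is at most about $2\cdot 29.6\approx 59.1<64$, or about $49.3$ with the sharper bound. Hence $\|f_\A-g\|^2<64\epsilon\binom{n}{k}$, i.e.\ $f_\A$ is $64\epsilon$-close to the affine function $g$.

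One point deserves care. The definition of an affine function is phrased with codomain $\{0,1\}$, but the projection $g$ is real-valued. Filmus's theorem (Theorem~\ref{FNKthninkslice}) is stated for closeness to a real affine function $c_0+\sum c_i x_i$, so I read the definition in that sense, which is what Theorem~\ref{FNKthninkslice} actually requires. The only other thing to check is the identification of $E_0\oplus E_1$ with the span of $\1$ and the star indicators, and Lovász's description of $E_1$ quoted in the proof gives this directly. The main obstacle is therefore just the numerical constant, which closes with margin as computed above.
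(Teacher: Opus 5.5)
Your proposal is correct and follows the paper's own argument: take the projection $\f_0+\f_1$ of $\1_\A$ onto $E_0\oplus E_1$ as the affine function, identify the residual with $P_{\1_\A}$ from Claim~\ref{claimprojectionsmall}, and bound $\frac{n^2}{(n-2)(n-2k)}$ using $n\ge 2.07k\ge 4.14$. The paper's numerical bound is about $57.2$, and yours is about $59.1$; both are below $64$. Your remark that ``affine'' must be read as real-valued, which is what Filmus's theorem needs, usefully clarifies a sloppiness in the paper's definition, which is stated with codomain $\{0,1\}$.
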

\begin{proof}
Let $\f_0$ and $\f_1$ be the projections of $\1_\A$ onto subspaces $E_0$ and
$E_1$, respectively.
Then the function $\f_0+\f_1$ is affine by the structures of $E_0$ and
$E_1$.
Moreover, by the orthogonality of the eigenspaces and Claim \ref{claimprojectionsmall},
we deduce that
\begin{align*}
||\1_\A-(\f_0+\f_1)||^2=&||P_{\1_\A}||^2
< \frac{n^2}{ (n-2 )(n-2k )}\epsilon N
=\frac{1}{ \left(1-\frac{2}{n} \right)\left(1-\frac{2k}{n}\right)}\epsilon N\\
\leq&\frac{1}{\left(1-\frac{2}{4.14} \right) \left(1-\frac{2k}{2.07k}\right)}\epsilon N
<64\epsilon N ,
\end{align*}
%
where the last inequality follows from the fact that $n\geq 2.07k\geq 4.14.$
Thus, $\1_\A$ is $64\epsilon$-close to the affine function $\f_0+\f_1$,
as desired.
\end{proof}

Write $\varepsilon=64\epsilon$. 
Recall that $\epsilon\leq \frac{k^2}{C^2n^2}$ and $C=64C_0$, so
$$\varepsilon=64\epsilon \leq \frac{64k^2}{(64C_0 n)^2}  < \frac{k}{128n}$$
and
$$\frac{C_0 \sqrt{\varepsilon}n}{k}=
\frac{C_0n\sqrt{64\epsilon}}{k}
\leq\frac{C_0n }{k}\frac{k }{8C_0n}=\frac18<1.$$
By Claim \ref{claimcloseaffine}, $f_\A$ is $64\epsilon$-close to an affine function.
Hence, applying Theorem \ref{FNKthninkslice} with $\varepsilon=64\epsilon$ and
$C_0\varepsilon=C_064\epsilon=C\epsilon$,
we conclude that there exist $i$
such that $f_\A$ is $(C\epsilon)$-close to one of $\{0,1, f_{\S_i},1-f_{\S_i}\}$.
By the symmetry of $\A$ and $\B$, we also get that there exist $j$ such that $f_\B$ is $(C\epsilon)$-close to one of $\{0,1, f_{\S_j},1-f_{\S_j}\}$.

Note that $C\epsilon\leq \frac{Ck^2}{C^2n^2} <\frac{k^2}{64n^2}$.
If $f_\A$ is $(C\epsilon)$-close to $0$, then 
$$|\A| |\B|= ||\1_\A||^2  ||\1_\B||^2 \leq C\epsilon N \cdot N
<\frac{1}{64} \binom{n-1}{k-1}^2 <(1-\epsilon)\binom{n-1}{k-1}^2,$$
a contradiction to the hypothesis.
So $f_\A$ must be $(C\epsilon)$-close to one of
$\{f_{\S_i},1-f_{\S_i},1\}$.
If $f_\B$ is $(C\epsilon)$-close to one of $\{1-f_{\S_j},1\}$, then
\begin{align}\nonumber
|\A| |\B|
\geq &
\left( \binom{n-1}{k-1} -C\epsilon N  \right)\left( \binom{n-1}{k } -C\epsilon N  \right)\\\nonumber
\geq&\binom{n-1}{k-1}\binom{n-1}{k }
- C\epsilon N\binom{n-1}{k-1} - C\epsilon N \binom{n-1}{k } \\\label{caseSandC}
=&  \binom{n-1}{k-1}^2 \left(\frac{n-k}{k}-C\epsilon\frac{n^2}{k^2}   \right).
\end{align}
As $n\geq 2.07k$, we get that 
$$\frac{n-k}{k}- C\epsilon\frac{n^2}{k^2}
\geq \frac{1.07k}{k}- C\frac{k^2}{  C^2 n ^2 }\frac{n^2}{k^2}
=  1.07- \frac{1}{C}\geq 1.07- \frac{1}{64}>1.$$
Applying this estimate to \eqref{caseSandC}, we obtain $|\A| |\B|>\binom{n-1}{k-1}^2$,
a contradiction to Theorem \ref{EKRcross}.

By the above discussion and the symmetry of $\A$ and $\B$,
we obtain that $f_\A$ is $(C\epsilon)$-close to $f_{\S_i}$
and $f_\B$ is $(C\epsilon)$-close to $f_{\S_j}$.
If $i\neq j$, by Lemma \ref{crossstarlemma}, then we get
\begin{align*}
|\A| |\B|\leq & \big(| \A\cap \mathcal{S}_i|+ C\epsilon N\big)
\big(|\B \cap \mathcal{S}_j| + C\epsilon N\big)\\
\leq & | \A\cap \mathcal{S}_i| | \B\cap \mathcal{S}_j|
+ 2C\epsilon N \binom{n-1}{k-1}+ (C\epsilon N)^2\\
\leq& \binom{n-2}{k-2}\binom{n-1}{k-1}+\binom{n-3}{k-2}^2
+ \frac{2}{64}\binom{n-1}{k-1}^2 + \left(\frac{1}{64 }\binom{n-1}{k-1} \right)^2\\
<& \binom{n-1}{k-1}^2\left(\frac{k-1}{n-1}
+ \left(\frac{(k-1)(n-k)}{(n-1)(n-2)}\right)^2+ \frac{3}{64} \right)\\
\leq & \binom{n-1}{k-1}^2\left(\frac{k-1}{n-1}
+ \left(\frac{k-1}{n-1}\right)^2+ \frac{3}{64} \right)\\
< & \binom{n-1}{k-1}^2\left(\frac{1}{2}+\frac{1}{4} + \frac{3}{64} \right) <(1-\epsilon)\binom{n-1}{k-1}^2,
\end{align*}
a contradiction to the hypothesis.
So we conclude that $i=j$, and hence that $f_\A$ and $f_\B$ are both $(C\epsilon)$-close to $f_{\S_i}$.
The proof is completed.
\end{proof}

\begin{remark}
We note that the constant $2.07$ in the theorem is not optimal for Theorem \ref{EKRcrossstabilitythm}.
According to the proof of Theorem \ref{EKRcrossstabilitythm},
if we choose a larger universal constant $C$, then we can obtain a smaller constant closer to $2$ than $2.07$.
\end{remark}

\section{Proofs of Theorems~\ref{EKR2normmainthm} and~\ref{clusterEKRlp}}
\label{sec:proof-main}

This section is devoted to proving~Theorems~\ref{EKR2normmainthm} and~\ref{clusterEKRlp}.
The proof relies on the eigenvalues of the matrices
of the Johnson scheme and Theorem~\ref{EKRcrossstabilitythm}.


Fix integers $n>k$ and set
\[
N=\binom{n}{k}
\qquad\text{and}\qquad
m=\min\{k,n-k\}.
\]
For $0\leq j\leq m$, let $A_j$ be the matrix indexed by
$\binom{[n]}{k}$ whose $(S,T)$-entry is $1$ if $|S\cap T|=k-j$, and is
$0$ otherwise.  The matrices $A_0,A_1,\ldots,A_m$ form the Johnson
association scheme $J(n,k)$.
It is known that, see, e.g.,~\cite{Delsarte} or~\cite[Chapter 6]{GodsilEKRbook}, there is an orthogonal decomposition
\[
\mathbb{R}^{N}=E_0\oplus E_1\oplus\cdots\oplus E_m
\]
into common eigenspaces, where
\[
\dim E_i=\binom{n}{i}-\binom{n}{i-1}
\quad (0\leq i\leq m),
\]
with the convention $\binom{n}{-1}=0$.  In particular,
$E_0=\langle\mathbf{1}\rangle$, and
\[
E_0\oplus E_1
=\operatorname{span}\{\mathbf{1}_{\mathcal{S}_r}:r\in[n]\},
\qquad
\mathcal{S}_r=\left\{S\in\binom{[n]}{k}:r\in S\right\}.
\]
The eigenvalue of $A_j$ on $E_i$ is
\[
\theta_j(i)
=\sum_{s=0}^{j}(-1)^{j-s}
\binom{k-s}{j-s}\binom{k-i}{s}
\binom{n-k+s-i}{s}.
\]
Notice that when $n<2k$ the decomposition stops at $E_{n-k}$, not at
$E_k$.

For $0\leq d\leq k$, define
\[
M_d=\sum_{j=0}^{m}\binom{k-j}{d}A_j.
\]
Bey~\cite{Bey} proved that the eigenvalues of $M_d$ are given by
\begin{equation}\label{eigenofMd}
\lambda_d(i)
:=\sum_{j=0}^{m}\binom{k-j}{d}\theta_j(i)
=\binom{k-i}{k-d}\binom{n-d-i}{k-d},
\qquad 0\leq i\leq m.
\end{equation}
If $0<d<k$, then
\[
\lambda_d(0)>\lambda_d(1)>\lambda_d(i)\geq0
\qquad (2\leq i\leq m).
\]
Indeed,
$\lambda_d(i)=0$ for $i>d$, and for
$0\leq i<\min\{d,m\}$,
\[
\frac{\lambda_d(i+1)}{\lambda_d(i)}
=\frac{d-i}{k-i}\frac{n-k-i}{n-d-i}<1,
\]
which proves the assertion.

The following calculation on $\lambda _d(0)$ and $\lambda _d(1)$ will be used in the next proofs.

\begin{lemma}\label{eigenvalue01ofMd}
Let $n>k>d>0$.  Then
\[
\frac{\lambda_d(0)-\lambda_d(1)}{\binom{n}{k}}
=\frac{\binom{k}{d}\binom{k-1}{d}}{\binom{n-1}{d}}.
\]
\end{lemma}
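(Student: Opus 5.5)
This is a direct computation from the closed form \eqref{eigenofMd}. Setting $i=0$ and $i=1$ gives
\[
\lambda_d(0)=\binom{k}{d}\binom{n-d}{k-d},
\qquad
\lambda_d(1)=\binom{k-1}{d-1}\binom{n-d-1}{k-d}.
\]
Here we used $\binom{k}{k-d}=\binom{k}{d}$ and $\binom{k-1}{k-d}=\binom{k-1}{d-1}$. Both formulas are valid because $n>k>d>0$ forces $m\ge 1$.

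The plan is to write both terms as multiples of the common factor $\binom{k-1}{d-1}\binom{n-d-1}{k-d-1}$.

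\emph{First term.} Use $\binom{k}{d}=\frac{k}{d}\binom{k-1}{d-1}$ and $\binom{n-d}{k-d}=\frac{n-d}{k-d}\binom{n-d-1}{k-d-1}$.

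\emph{Second term.} Use $\binom{n-d-1}{k-d}=\frac{n-k}{k-d}\binom{n-d-1}{k-d-1}$.

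The difference then equals $\binom{k-1}{d-1}\binom{n-d-1}{k-d-1}/(k-d)$ times
\[
\frac{k(n-d)}{d}-(n-k)=\frac{kn-kd-dn+dk}{d}=\frac{n(k-d)}{d}.
\]
Cancelling $k-d$ gives
\[
\lambda_d(0)-\lambda_d(1)=\frac{n}{d}\binom{k-1}{d-1}\binom{n-d-1}{k-d-1}=\frac{n}{k}\binom{k}{d}\binom{n-d-1}{k-d-1}.
\]

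It remains to check that this equals $\binom{n}{k}\binom{k}{d}\binom{k-1}{d}/\binom{n-1}{d}$. After cancelling $\binom{k}{d}$, the target identity is
\[
\frac{n}{k}\binom{n-d-1}{k-d-1}\binom{n-1}{d}=\binom{n}{k}\binom{k-1}{d}.
\]
Both sides count pairs $(D,K)$ with $D\subseteq K\subseteq[n-1]\cup\{n\}$, where $n\in K$, $D\in\binom{[n-1]}{d}$ and $|K|=k$, up to the factor $n/k$.

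Equivalently, use $\binom{n-1}{d}\binom{n-1-d}{k-1-d}=\binom{n-1}{k-1}\binom{k-1}{d}$ together with $\frac{n}{k}\binom{n-1}{k-1}=\binom{n}{k}$. This closes the computation.

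There is no real obstacle here. The only care needed is the index bookkeeping in the binomial identities, and confirming that $i=1\le m$ so the formula for $\lambda_d(1)$ applies.
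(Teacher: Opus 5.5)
Your computation is correct and follows essentially the paper's route: evaluate \eqref{eigenofMd} at $i=0,1$, simplify the difference to $\frac{n}{k}\binom{k}{d}\binom{n-d-1}{k-d-1}$ (the paper's $\binom{k}{d}\binom{n-d}{k-d}\frac{n(k-d)}{k(n-d)}$ is the same quantity), and finish with the subset-of-a-subset identity. The only difference is bookkeeping. You apply that identity at $(n-1,k-1,d)$ together with $\frac{n}{k}\binom{n-1}{k-1}=\binom{n}{k}$, whereas the paper applies it at $(n,k,d)$ and then absorbs $\frac{k-d}{k}$ and $\frac{n-d}{n}$ into $\binom{k-1}{d}$ and $\binom{n-1}{d}$.
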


\begin{proof}
By~\eqref{eigenofMd}, one may check that
\begin{align*}
\lambda _d(0)-\lambda _d(1) =&\binom{k }{ d}
\binom{n-d}{k-d}
-   \binom{k-1}{ d-1}
\binom{n-d-1}{k-d}
= \binom{k}{d}\binom{n-d}{k-d}
\left(1- \frac{d}{k} \frac{n-k}{n-d} \right)\\
=&  \binom{k}{d}\binom{n-d}{k-d}
\frac{n(k-d)} {k(n-d)}
\overset{(\star)}{=}\binom{k}{d} \binom{n}{k} \frac{\binom{k}{d} \frac{k-d}{k}} {\binom{n}{d}\frac{n-d}{n}   }
=\binom{k}{d}\binom{n}{k} \frac{\binom{k-1}{d}}{\binom{n-1}{d}},
\end{align*}
as desired,
where the equality ($\star$) follows from the identity
\begin{align*}\binom{n}{k}\binom{k}{d}
=\binom{n}{d}\binom{n-d}{k-d}.  \tag*{\qedhere}
\end{align*}
\end{proof}

Generalizing a result of de Caen~\cite[Theorem 1]{Caen} from graphs to hypergraphs,
Bey~\cite[Theorem 1]{Bey} presented a tight upper bound for
$||\mathbf{v}_{d}(\A)||^2$ of a $k$-graph $\A$ and characterized all extremal families.
This result is the main tool in the work of~\cite{Brooks}.
To prove Theorem \ref{EKR2normmainthm}, we need the following result, which
coincides with Bey's result when $\A=\B$.

\begin{theorem}\label{innerproductABthm}
Let $n\geq k\geq d\geq0$, and let
$\mathcal{A},\mathcal{B}\subseteq\binom{[n]}{k}$.
If $n=k$, then
\begin{equation}\label{eq:innerproduct-n-equals-k}
\big\langle\mathbf{v}_d(\mathcal{A}),
\mathbf{v}_d(\mathcal{B})\big\rangle
=\binom kd|\mathcal{A}|\,|\mathcal{B}|.
\end{equation}
If $n>k$, then
\begin{equation}\label{innerproductABthminq}
\big\langle\mathbf{v}_d(\mathcal{A}),
\mathbf{v}_d(\mathcal{B})\big\rangle
\leq
\frac{\binom{k}{d}\binom{k-1}{d}}{\binom{n-1}{d}}
|\mathcal{A}|\,|\mathcal{B}| +\binom{k-1}{d-1}\binom{n-d-1}{k-d}
\sqrt{|\mathcal{A}|\,|\mathcal{B}|}.
\end{equation}
For $n>k$, equality is automatic when $d=0$,
and equality
holds if and only if at least one family is empty or
$\mathcal{A}=\mathcal{B}$ when $d=k$.
Suppose in addition that $0<d<k$ and
$|\mathcal{A}|\,|\mathcal{B}|>0$.
Equality holds in
\eqref{innerproductABthminq} if and only if
$\mathcal{A}=\mathcal{B}=:\mathcal{F}$ and one of the following holds:
\begin{enumerate}[label={\rm (\roman*)}]
\item $n=k+1$, in which case $\mathcal{F}$ may be any nonempty
subfamily of $\binom{[n]}{k}$;
\item $n\geq k+2$, in which case $\mathcal{F}$ is either
$\binom{[n]}{k}$, a star $\mathcal{S}_r$, or the complement
$\binom{[n]}{k}\setminus\mathcal{S}_r$ of a star.
\end{enumerate}
\end{theorem}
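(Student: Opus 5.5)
The approach is spectral, via the matrix $M_d$ already introduced. The starting point is the identity
\[
\big\langle\mathbf{v}_d(\A),\mathbf{v}_d(\B)\big\rangle=\sum_{U}\sum_{A\ni U}\sum_{B\ni U}1=\sum_{A\in\A,B\in\B}\binom{|A\cap B|}{d}=\1_\A^{\mathsf T}M_d\1_\B,
\]
because $(M_d)_{S,T}=\binom{k-j}{d}$ when $|S\cap T|=k-j$. If $n=k$, there is a single $k$-set, so $M_d=\binom kd$ is a $1\times1$ matrix and \eqref{eq:innerproduct-n-equals-k} is immediate.

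For $n>k$, expand $\1_\A=\sum_i \f_i$ and $\1_\B=\sum_i \mathbf{g}_i$ along $E_0\oplus\cdots\oplus E_m$. Then
\[
\1_\A^{\mathsf T}M_d\1_\B=\sum_i\lambda_d(i)\langle \f_i,\mathbf{g}_i\rangle .
\]
The $E_0$ term is $\lambda_d(0)|\A||\B|/N$. For $i\ge1$ we have $0\le\lambda_d(i)\le\lambda_d(1)$, so by Cauchy--Schwarz
\[
\sum_{i\ge1}\lambda_d(i)\langle\f_i,\mathbf{g}_i\rangle\le\lambda_d(1)\Big(\sum_{i\ge1}\|\f_i\|^2\Big)^{1/2}\Big(\sum_{i\ge1}\|\mathbf{g}_i\|^2\Big)^{1/2}.
\]
The right side equals $\lambda_d(1)\sqrt{(|\A|-|\A|^2/N)(|\B|-|\B|^2/N)}$. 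Lemma~\ref{specialAGinq} bounds this by $\lambda_d(1)\big(\sqrt{|\A||\B|}-|\A||\B|/N\big)$. Combining the two pieces gives
\[
\frac{\lambda_d(0)-\lambda_d(1)}{N}|\A||\B|+\lambda_d(1)\sqrt{|\A||\B|}.
\]
Lemma~\ref{eigenvalue01ofMd} together with $\lambda_d(1)=\binom{k-1}{d-1}\binom{n-d-1}{k-d}$ turns this into \eqref{innerproductABthminq} for $0<d<k$. The edge cases are checked directly. For $d=0$, $M_0=J$ and the right side equals $|\A||\B|$ (the second coefficient vanishes), so equality is automatic. For $d=k$, $M_k=I$, the first coefficient vanishes and the second is $1$, so the bound reads $|\A\cap\B|\le\sqrt{|\A||\B|}$. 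Equality there forces $|\A\cap\B|=|\A|=|\B|$ (or one family empty), i.e.\ $\A=\B$.

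The equality case for $0<d<k$ with $|\A||\B|>0$ is where I expect the work to be. Equality must hold in three places.
\begin{enumerate}[label={\rm (\arabic*)}]
\item In Lemma~\ref{specialAGinq}, which forces $|\A|(N-|\B|)=|\B|(N-|\A|)$, i.e.\ $|\A|=|\B|$.
\item In the eigenvalue comparison: since $\lambda_d(i)<\lambda_d(1)$ for $i\ge2$ by the strict ratio computation, every component with $i\ge2$ carrying nonzero weight must vanish. When $n\ge k+2$ we have $m\ge2$, so $\1_\A,\1_\B\in E_0\oplus E_1$. When $n=k+1$, $m=1$ and there is no constraint.
\item In Cauchy--Schwarz, which forces $\f_1$ and $\mathbf{g}_1$ to be nonnegatively proportional. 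Together with equal norms (which follows from $|\A|=|\B|$), this gives $\f_1=\mathbf{g}_1$; since the $E_0$ parts also agree, $\A=\B$. If both $E_1$ parts vanish, then $\A$ is constant, so $\A=\binom{[n]}k$.
\end{enumerate}
When $n=k+1$ every nonempty $\F$ works: there $\sum_{i\ge1}\|\f_i\|^2$ sits entirely in $E_1$ and all the inequalities become equalities once $\A=\B$.

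The remaining obstacle is classifying Boolean functions on $\binom{[n]}k$ lying in $E_0\oplus E_1$ (degree at most $1$) when $n\ge k+2$. I would invoke the known fact, from Filmus's work or an elementary argument, that such a function $c_0+\sum c_rx_r$ that is $\{0,1\}$-valued must be $0$, $1$, $x_r$ or $1-x_r$. The elementary argument: for $S$ and $S'=S-a+b$, the difference $c_b-c_a\in\{-1,0,1\}$. Using $n\ge k+2$ one swaps within a larger complement to show that the $c_r$ take at most two values differing by $\pm1$, with one value attained exactly once, since otherwise some $k$-set would produce a value outside $\{0,1\}$. This yields $\F\in\{\binom{[n]}k,\mathcal S_r,\binom{[n]}k\setminus\mathcal S_r\}$. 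The converse, that these families attain equality, follows by checking that each is constant on $E_{\ge2}$-free components and applying the equality conditions above.
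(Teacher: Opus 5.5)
Your proposal follows the paper's proof essentially step for step: the identity $\langle\v_d(\A),\v_d(\B)\rangle=\1_\A^{\mathsf T}M_d\1_\B$, bounding the nontrivial eigencomponents by $\lambda_d(1)$ and Cauchy--Schwarz, then Lemma~\ref{specialAGinq} and Lemma~\ref{eigenvalue01ofMd}, and for equality the classification of Boolean degree-one functions on the slice. The paper simply cites that classification from Filmus; your elementary swap argument also works, and the double swap $S\mapsto S-\{a,a'\}+\{b,b'\}$ is where $k\ge2$ and $n\ge k+2$ enter.

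One point to tighten: equality in the eigenvalue comparison only gives $\|\f_i\|\,\|\mathbf{g}_i\|=0$ for $i\ge2$, not that both components vanish. To conclude $\1_\A,\1_\B\in E_0\oplus E_1$, first use the equality case of Cauchy--Schwarz, namely proportionality of $(\|\f_i\|)_{i\ge1}$ and $(\|\mathbf{g}_i\|)_{i\ge1}$, together with $|\A|=|\B|$ to get $\|\f_i\|=\|\mathbf{g}_i\|$ for all $i$. This is the order the paper uses: first $|a_t|=|b_t|$, then $a_tb_t=0$ forces $a_t=b_t=0$ whenever $\alpha_t<\lambda_1$.
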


\begin{proof}
The equality \eqref{eq:innerproduct-n-equals-k} is obvious when $n=k$.
Assume that $n>k$.
For $d=0$, both sides of~\eqref{innerproductABthminq} are
$|\mathcal{A}|\,|\mathcal{B}|$.
For $d=k$, the asserted inequality is
\[
|\mathcal{A}\cap\mathcal{B}|
\leq \sqrt{|\mathcal{A}|\,|\mathcal{B}|},
\]
which is trivial, and equality
holds if and only if at least one family is empty or
$\mathcal{A}=\mathcal{B}$.

Fix $n>k>d>0$, write $\lambda_i=\lambda _d(i)$,
and let $\alpha_1\geq \alpha_2\geq \cdots \geq \alpha_N$ be all eigenvalues of $M_d$ with a corresponding real orthonormal eigenbasis
$\v_1,\v_2, \ldots ,\v_N$, where $\v_1=\1\big/\sqrt{N}$ and $N=\binom{n}{k}$.
Assume that $\1_\A=\sum_{t=1}^Na_t \v_t$
and $\1_\B=\sum_{t=1}^Nb_t \v_t$.
Observe that
\begin{equation}\label{a111exp}
a_1=\big\langle \1_\A,\1\big/\sqrt{N}\big\rangle=|\A| \big/\sqrt{N},
~ \left(\sum_{t=1}^N |a_t|^2\right)-|a_1|^2=|\A|-|\A|^2\big/ N,
\end{equation}
\begin{equation}\label{a222exp}
b_1=\big\langle \1_\B,\1\big/\sqrt{N}\big\rangle=|\B| \big/\sqrt{N},
~{\rm and~} \left(\sum_{t=1}^N |b_t|^2\right)-|b_1|^2=|\B|-|\B|^2\big/ N.
\end{equation}
On the one hand, we have
\begin{equation}\label{fMgexpUppe111}
\1_\A^TM_d \1_\B
=\sum_{j=0}^m \binom{k-j}{d} \left(\1_\A^TA_j\1_\B\right)
=\sum_{U\in \binom{[n]}{d}}
d_{\A}(U) d_{\B}(U)
=\big\langle \mathbf{v}_d(\A),\mathbf{v}_d(\B)\big\rangle.
\end{equation}
Indeed, the LHS of \eqref{fMgexpUppe111} counts the number of triples $(U,A,B)\in \binom {[n]}{d}\times \A \times \B$ such that $U\subseteq A\cap B,$ which is equals to
$\sum_{U\in \binom {[n]}{d}}
d_{\A}(U) d_{\B}(U)$.
On the other hand, we have
\begin{align}\label{ABcrossinq111}
\1_\A^TM_d \1_\B
=& \sum_{t=1}^N a_t b_t \alpha_t
\leq a_1 b_1 \lambda _0 + \lambda _1 \sum_{t=2}^N |a_t b_t |\\ \label{ABcrossinq222}
\leq & a_1 b_1\lambda _0 + \lambda _1
\left(\sum_{t=2}^N |a_t|^2\right)^{\frac{1}{2}}
\left(\sum_{t=2}^N |b_t|^2\right)^{\frac{1}{2}}
&& \text {(by Cauchy--Schwarz)}\\ \nonumber
=& a_1 b_1 \lambda _0 + \lambda _1\left(|\A|-|\A|^2/ N \right)^{\frac{1}{2}}
\left(|\B|-|\B|^2/ N \right)^{\frac{1}{2}}
&& \text {(by  \eqref{a111exp} and \eqref{a222exp})}\\\label{ABcrossinq333}
\leq &
a_1 b_1 \lambda _0 + \lambda _1\left( \big(|\A||\B|\big)^{\frac{1}{2}}- \frac{|\A||\B|}{N} \right)&& \text {(by Lemma \ref{specialAGinq})} \\  \nonumber
=& \frac{|\A||\B|}{N} \big(\lambda _0-\lambda _1\big)+\big(|\A||\B|\big)^{\frac{1}{2}}\lambda _1\\\label{ABcrossinq444}
=&\frac{\binom{k}{d}\binom{k-1}{d}}{\binom{n-1}{d}}|\A||\B|
+\binom{k-1}{d-1} \binom{n-d-1}{k-d}\big(|\A||\B|\big)^{\frac{1}{2}},
&& \text {(by Lemma \ref{eigenvalue01ofMd})}
\end{align}
which implies \eqref{innerproductABthminq} with the aid of \eqref{fMgexpUppe111}, as desired.

Assume that  $|\A||\B|>0$ and
the equality holds in \eqref{innerproductABthminq}.
The equality occurs at \eqref{ABcrossinq333} implies that $|\A|=|\B|$ by Lemma \ref{specialAGinq}, and hence that
$a_1 =b_1 $ and $\sum_{t=2}^N |a_t|^2=\sum_{t=2}^N |b_t|^2$ by  \eqref{a111exp} and \eqref{a222exp}.
For the Cauchy--Schwarz step \eqref{ABcrossinq222} to be an equality,
we must have $|a_t|= \alpha |b_t|$ for some $\alpha$ and all $2\leq t \leq N,$
which implies that $|a_t|= |b_t|$ for all $1\leq t \leq N.$
Moreover,
we also need equality at \eqref{ABcrossinq111},
so
$\sum_{t=2}^N a_t b_t \alpha_t=\lambda _1 \sum_{t=2}^N |a_t b_t |$.
For every $2\leq t\leq N$,
$$\lambda_1|a_tb_t|-\alpha_ta_tb_t$$
is nonnegative.
Hence equality forces $a_tb_t\geq 0$ when
$\alpha_t=\lambda_1$, and $a_tb_t=0$ when $\alpha_t<\lambda_1$.
Recall that $|a_t|= |b_t|$ for all $1\leq t \leq N,$
so we must have
$a_t= b_t$ for all $1\leq t \leq N$ and $a_t=0$ when $\alpha_t< \lambda _1$.
Thus, $\1_\A =\1_\B$ lies in the
the subspace spanned by eigenvectors of eigenvalues $\lambda _0$ and $\lambda _1$, that is $E_0\oplus E_1.$
If $n=k+1$, then $m=1$ and $E_0\oplus E_1=\mathbb R^N$, so every
nonempty common family $\A=\B$ gives equality, as asserted in~(i).
Assume that $n\geq k+2$.
Since $\1_\A\in E_0\oplus E_1$, the Boolean
function $f_\A$ is affine on the slice.  The classification of Boolean
degree-one functions on a slice~\cite[Lemma~3.4]{Filmus} implies that
$f_\A$ is one of $\{1, f_{\S_i},1-f_{\S_i}\}$ for some
$i\in [n]$, which  proves~(ii) and completes the proof.
\end{proof}

To prove the `stability' part of Theorem~\ref{EKR2normmainthm}, we need the following stability result of
Theorem~\ref{innerproductABthm}.

\begin{lemma}\label{stabilitylemmaip}
Let $n>k>d>0$ be integers.
If $\A,\B\subseteq \binom{[n]}{k}$ such that
$$\big\langle\v_d(\A),\v_d(\B)\big\rangle\geq (1-\epsilon)\left(\frac{\binom{k}{d}\binom{k-1}{d}}{\binom{n-1}{d}} X^2
+\binom{k-1}{d-1} \binom{n-d-1}{k-d} X \right)$$
for some $0\leq \epsilon \leq 1$ and a positive number $X$, then
$|\A||\B|\geq (1-2\epsilon)X^2.$
\end{lemma}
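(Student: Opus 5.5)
Write $\alpha=\frac{\binom{k}{d}\binom{k-1}{d}}{\binom{n-1}{d}}>0$ and $\beta=\binom{k-1}{d-1}\binom{n-d-1}{k-d}>0$, and set $g(x)=\alpha x^2+\beta x$ for $x\geq 0$. The plan is to combine the upper bound of Theorem~\ref{innerproductABthm} with the hypothesis. Put $Y=\sqrt{|\A||\B|}$. Inequality~\eqref{innerproductABthminq} gives $\langle\v_d(\A),\v_d(\B)\rangle\leq g(Y)$. The hypothesis gives a lower bound on the same quantity, so
\[
g(Y)\geq (1-\epsilon)g(X).
\]
The task is therefore purely one-variable: deduce $Y^2\geq(1-2\epsilon)X^2$ from this inequality, using that $g$ is increasing on $[0,\infty)$.

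We may assume $\epsilon<1/2$, since otherwise $1-2\epsilon\le 0$ and the claim is trivial. Suppose for contradiction that $Y^2<(1-2\epsilon)X^2$, so $Y<\sqrt{1-2\epsilon}\,X$. Monotonicity of $g$ then gives
\[
g(Y)<g(\sqrt{1-2\epsilon}\,X)=(1-2\epsilon)\alpha X^2+\sqrt{1-2\epsilon}\,\beta X .
\]
It suffices to show that this right-hand side is at most $(1-\epsilon)(\alpha X^2+\beta X)$. Compare the two terms separately:
\begin{itemize}
\item For the quadratic term, $1-2\epsilon\leq 1-\epsilon$.
\item For the linear term, $\sqrt{1-2\epsilon}\leq 1-\epsilon$, because $(1-\epsilon)^2=1-2\epsilon+\epsilon^2\geq 1-2\epsilon$.
\end{itemize}
Adding these gives $g(\sqrt{1-2\epsilon}\,X)\le(1-\epsilon)g(X)$, so $g(Y)<(1-\epsilon)g(X)$. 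This contradicts $g(Y)\ge(1-\epsilon)g(X)$, and hence $|\A||\B|=Y^2\geq(1-2\epsilon)X^2$.

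The argument has no real obstacle. The only step needing care is the elementary inequality $\sqrt{1-2\epsilon}\leq1-\epsilon$, and the observation that $g$ is strictly increasing on the nonnegative reals, which holds because $\alpha,\beta>0$ when $0<d<k<n$. The conclusion even holds with the slightly stronger form $g(Y)\ge(1-\epsilon)g(X)$, but the factor $1-2\epsilon$ suffices for later use.
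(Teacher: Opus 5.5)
Your argument is correct and is essentially the paper's. The paper re-derives the bound of Theorem~\ref{innerproductABthm} with an extra term $(\lambda_2-\lambda_1)D_\A D_\B\le 0$, which it immediately discards, and it treats $n=k+1$ separately; it then compares with the hypothesis term by term, which amounts to your monotonicity argument run at $(1-\epsilon)X$ instead of $\sqrt{1-2\epsilon}\,X$, giving the slightly sharper $|\A||\B|\ge(1-\epsilon)^2X^2$. Citing Theorem~\ref{innerproductABthm} directly, as you do, is cleaner and makes both the spectral recomputation and the case split unnecessary.
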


\begin{proof}
With the same notation in the proof of Theorem \ref{innerproductABthm}, set $D_\A^2:=\sum_{t=n+1}^N |a_t|^2$ and $D_\B^2:=\sum_{t=n+1}^N |b_t|^2$.
We first treat the case $n\geq k+2$.
In this case, $m=\min\{k,n-k\}\geq 2$, so $\lambda_2$ is well-defined.
One may check that
\begin{align*}
\big\langle \mathbf{v}_d(\A),\mathbf{v}_d(\B)\big\rangle
=&\1_\A^TM_d \1_\B
=   \sum_{t=1}^N a_t b_t \alpha_t
\leq    a_1 b_1 \lambda _0 + \lambda _1 \sum_{t=2}^n |a_t b_t |+ \lambda _2 \sum_{t=n+1}^N |a_t b_t |\\
\leq & a_1 b_1 \lambda _0 + \lambda _1\left(\sum_{t=2}^n |a_t|^2\right)^{\frac{1}{2}}
\left(\sum_{t=2}^n |b_t|^2\right)^{\frac{1}{2}}
+ \lambda _2\left(\sum_{t=n+1}^N |a_t|^2\right)^{\frac{1}{2}}
\left(\sum_{t=n+1}^N |b_t|^2\right)^{\frac{1}{2}}\\
= &a_1 b_1 \lambda _0 + \lambda _1\Big(|\A|-|\A|^2/ N -D_\A^2\Big)^{\frac{1}{2}}
\Big(|\B|-|\B|^2/ N -D_\B^2\Big)^{\frac{1}{2}}
+ \lambda _2D_\A D_\B\\
\overset{(\ast)}{\leq} & a_1 b_1 \lambda _0 + \lambda _1\left(\big(|\A||\B|\big)^{\frac{1}{2}}- \frac{|\A||\B|}{N}-D_\A D_\B\right)
+ \lambda _2D_\A D_\B\\
= & \frac{|\A||\B|}{N}\left(\lambda _0 - \lambda _1\right)+\lambda _1\big(|\A||\B|\big)^{\frac{1}{2}}
+\left(\lambda _2-\lambda _1\right)D_\A D_\B.
\end{align*}
where the inequality ($\ast$) is obtained by using Lemma \ref{specialAGinq} twice.
On the other hand, by the assumption and Lemma \ref{eigenvalue01ofMd}, we have
\begin{align*}
\big\langle\v_d(\A),\v_d(\B)\big\rangle
\geq& (1-\epsilon) \left(\frac{\binom{k}{d}\binom{k-1}{d}}{\binom{n-1}{d}} X^2
+\binom{k-1}{d-1} \binom{n-d-1}{k-d} X\right)\\
=&(1-\epsilon)\left( \frac{\lambda _0-\lambda _1}{ \binom{n}{k} } X^2
+\lambda _1  X\right).
\end{align*}
Combining these inequalities, we get that
\begin{align*}
\left(\lambda _2-\lambda _1\right)D_\A D_\B
\geq& \frac{\lambda _0-\lambda _1}{ \binom{n}{k} }
\left((1-\epsilon) X^2- |\A||\B|\right)
+\lambda _1 \left((1-\epsilon) X-\big(|\A||\B|\big)^{\frac{1}{2}}\right)\\
\geq& \frac{\lambda _0-\lambda _1}{ \binom{n}{k} }
\left((1-\epsilon)^2 X^2- |\A||\B|\right)
+\lambda _1 \left((1-\epsilon) X-\big(|\A||\B|\big)^{\frac{1}{2}}\right).
\end{align*}
Note that $\left(\lambda _2-\lambda _1\right)D_\A D_\B$ is non-positive,
so we deduce that
$$|\A||\B|\geq (1-\epsilon)^2X^2
\geq(1-2\epsilon) X^2,$$
as desired.

It reminds to consider the case of $n=k+1$.
In this case, $m=\min\{k,n-k\}=1$ and $N=\binom{n}{k}=n$, so the matrix $M_d$ has only the two eigenvalues $\lambda_0=\lambda_d(0)$ and $\lambda_1=\lambda_d(1)$.
By the same discussion of $n\geq k+1$, we have
$$
(1-\epsilon)\left( \frac{\lambda _0-\lambda _1}{ \binom{n}{k} } X^2
+\lambda _1  X\right)
\leq
\big\langle \mathbf{v}_d(\A),\mathbf{v}_d(\B)\big\rangle
\leq  \frac{\lambda_0-\lambda_1}{\binom{n}{k}} {|\mathcal A||\mathcal B|} +\lambda_1 \big(|\A||\B|\big)^{\frac{1}{2}}.
$$
Thus,
$$\frac{\lambda _0-\lambda _1}{ \binom{n}{k} }
\left((1-\epsilon)^2 X^2- |\A||\B|\right)
+\lambda _1 \left((1-\epsilon) X-\big(|\A||\B|\big)^{\frac{1}{2}}\right)\leq 0.$$
So we deduce that
$$|\A||\B|\geq (1-\epsilon)^2X^2
\geq(1-2\epsilon) X^2,$$
which proves the result.
\end{proof}

We are now ready to prove the main results of this paper.

\begin{proof}[{\bf Proof of Theorem~\ref{EKR2normmainthm}}]
If $k\leq n<2k$,
then any two family $\A,\B\subseteq \binom{[n]}{k}$ are cross-intersecting.
For every
$U\in\binom{[n]}d$, we have
\[
d_{\mathcal A}(U),d_{\mathcal B}(U)
\leq\binom{n-d}{k-d},
\]
which gives
\[
\big\langle\mathbf v_d(\mathcal A),\mathbf v_d(\mathcal B)\big\rangle
\leq\binom nd\binom{n-d}{k-d}^{2}=\Gamma(n,k,d).
\]
For $d=0$, equality forces $|\mathcal A|=|\mathcal B|=\binom nk$.
For $d>0$, equality forces every $d$-set to have its maximum possible degree in both families, which again gives
$\mathcal A=\mathcal B=\binom{[n]}k$.

Assume that $n\geq2k$.
We next completes the proofs of (a) and (b) by considering the following three cases.

{\bf Case 1}: $d=0$.
In this case, we have
\[
\big\langle\mathbf{v}_0(\mathcal{A}),
\mathbf{v}_0(\mathcal{B})\big\rangle
=|\mathcal{A}|\,|\mathcal{B}|
\qquad\text{and}\qquad
\gamma(n,k,0)=\binom{n-1}{k-1}^2.
\]
For $n>2k$, the assertions follow from Theorems~\ref{EKRcross} and
\ref{EKRcrossstabilitythm}.
For $n=2k$, as $\A,\B\subseteq \binom{[n]}{k}$ are cross-intersecting, we deduce that $\mathcal B\subseteq\binom{[n]}k\setminus\overline{\mathcal A}$, where $\overline{\mathcal A}=\{[n]\setminus A:A\in\mathcal A\}$.
Hence,
\[
|\mathcal A||\mathcal B|\leq |\A|(\binom{n}{k}-|\A|)
\leq \binom{n}{k}^2/4=\binom{n-1}{k-1}^2.
\]
where the last equality holds as $n=2k$.
Moreover, equality holds if and only if $|\mathcal A|=|\mathcal B|=\binom{n}{k}/2$ and
$\mathcal B=\binom{[n]}k\setminus\overline{\mathcal A}$, which is
the asserted classification for $d=0$.

{\bf Case 2}: $d=k$.
In this case, we have
\[
\big\langle\mathbf{v}_k(\mathcal{A}),
\mathbf{v}_k(\mathcal{B})\big\rangle
=|\mathcal{A}\cap\mathcal{B}|
\qquad\text{and}\qquad
\gamma(n,k,d)=\binom{n-1}{k-1}.
\]
Since $\mathcal{A}$ and $\mathcal{B}$ are cross-intersecting,
$\mathcal{A}\cap\mathcal{B}$ is intersecting, and the upper bound $\big\langle\mathbf{v}_{d}(\A),\mathbf{v}_{d}(\B)\big\rangle
\leq \gamma(n,k,d)$
follows from Theorem~\ref{EKRcross}.
Assume that the equality holds, that is,
$$\big\langle\mathbf{v}_{d}(\A),\mathbf{v}_{d}(\B)\big\rangle
=|\mathcal{A}\cap\mathcal{B}|= \gamma(n,k,d)=\binom{n-1}{k-1}.$$
For $n>2k$, we have
\[
|\mathcal{A}|\,|\mathcal{B}|
\geq|\mathcal{A}\cap\mathcal{B}|^2
=\binom{n-1}{k-1}^2.
\]
The equality statement in Theorem~\ref{EKRcross} implies that
$\mathcal{A}=\mathcal{B}$ is a star.
For $n=2k$, we have that
$\mathcal A\cap\mathcal B=:\mathcal F$ is an intersecting family of
size $\binom{n-1}{k-1}= \frac{1}{2}\binom{n}{k}$.
Since the members of $\binom{[n]}{k}$ occur in $\binom{n-1}{k-1}$ complementary pairs
$
 \{C,[n]\setminus C\},
$
an intersecting family contains at most one member of each pair.
Thus, $\F$ contains exactly one member of every complementary pair.
We claim that $\A=\B=\F$.  Indeed, if
$H\in\A\setminus\F$, then $H\notin\F$, and hence
$[n]\setminus H\in\F\subseteq\B$.  This contradicts the
cross-intersection of $\A$ and $\B$.  Thus $\A=\F$, and the same
argument gives $\B=\F$.
Conversely, if $\F$ contains exactly one member of every
complementary pair, then $\F$ is intersecting.
Hence
$\A=\B=\F$ is cross-intersecting and satisfies
\[
 \big\langle\v_k(\A),\v_k(\B)\big\rangle
 =|\F|=\binom{n-1}{k-1},
\]
as desired.

{\bf Case 3}: $0<d<k$.
In this case,
$\big\langle\mathbf{v}_d(\mathcal{A}),
\mathbf{v}_d(\mathcal{B})\big\rangle
\leq \gamma(n,k,d)$ follows from Theorem~\ref{innerproductABthm} and Theorem~\ref{EKRcross}.
If $n>2k$, equality throughout forces
$|\mathcal{A}|\,|\mathcal{B}|=\binom{n-1}{k-1}^2$, and
Theorem~\ref{EKRcross} yields $\mathcal{A}=\mathcal{B}$ is a star.
Conversely, a common star attains the equality.
If $n=2k$, equality implies
$|\mathcal A||\mathcal B|=\binom{n-1}{k-1}^2$.
Moreover, equality in Theorem~\ref{innerproductABthm} then forces
$\mathcal A=\mathcal B$ to be the complete family, a star, or the
complement of a star.
The complete family is not intersecting, whereas both of the other two families are intersecting when $n=2k$.
This proves the boundary classification for $0<d<k$.

It remaids to prove (c).
Let $C_1$ be the constant in Theorem \ref{EKRcrossstabilitythm} and set
$C=2C_1$ which is the required constant.
Suppose that
\[
\big\langle\mathbf{v}_d(\mathcal{A}),
\mathbf{v}_d(\mathcal{B})\big\rangle
\geq(1-\epsilon)\gamma(n,k,d).
\]
If $d=0$, then
\[
  \bigl\langle \mathbf{v}_0(\mathcal A),\mathbf{v}_0(\mathcal B)\bigr\rangle
  =|\mathcal A||\mathcal B|
  \qquad\text{and}\qquad
  \gamma(n,k,0)=\binom{n-1}{k-1}^2.
\]
The assertion follows from Theorem~\ref{EKRcrossstabilitythm}.
If $d=k$, then
\[
  \bigl\langle \mathbf{v}_k(\mathcal A),\mathbf{v}_k(\mathcal B)\bigr\rangle
  =|\mathcal A\cap\mathcal B|
  \qquad\text{and}\qquad
  \gamma(n,k,k)=\binom{n-1}{k-1},
\]
so we get
\[
  |\mathcal A||\mathcal B|
  \geq |\mathcal A\cap\mathcal B|^2
  \geq(1-\epsilon)^2\binom{n-1}{k-1}^2
  \geq(1-2\epsilon)\binom{n-1}{k-1}^2.
\]
If $0<d<k$,
applying Lemma~\ref{stabilitylemmaip} with
$X=\binom{n-1}{k-1}$, we get
\[
|\mathcal{A}|\,|\mathcal{B}|
\geq(1-2\epsilon)\binom{n-1}{k-1}^2.
\]
Since
\[
\epsilon\leq\frac{k^2}{C^2n^2}
=\frac{k^2}{4C_1^2n^2},
\]
the result follows from Theorem~\ref{EKRcrossstabilitythm} with the parameter $\epsilon'=2\epsilon$ and $C_1\epsilon'= C_12\epsilon=C\epsilon$.
The proof is completed.
\end{proof}

To prove
Theorem~\ref{clusterEKRlp}, we need the following lemma due~\cite[Lemma 2.6]{Cao}.

\begin{lemma}[\cite{Cao}]\label{convextransferlem}
Let $N$ be a positive integer, and let $L,D,M,a$ be real numbers
satisfying
\[
0\leq L<D\leq M,
\qquad
0<a<N,
\]
and put
\[
c:=\frac{aD+(N-a)L}{M},
\qquad
\xi:=\frac LM,
\qquad
\eta:=c-N\xi.
\]
For $0\leq u\leq M$, define
\[
 B(u):=c\xi u^2+D\eta u.
\]
Fix $m\in[0,M]$, and suppose that $x_1,\ldots,x_N$ are nonnegative
real numbers satisfying
\begin{equation}\label{eq:abstract-envelope-assumptions}
x_i\leq\min\{m,D\},\qquad
\sum_{i=1}^N x_i=cm,\qquad
\sum_{i=1}^N x_i^2\leq  B(m).
\end{equation}
Then, for every real $p\geq2$,
\begin{equation}\label{eq:abstract-envelope}
\sum_{i=1}^N x_i^p
\leq
aD^p+(N-a)L^p.
\end{equation}
If $p>2$, equality forces $m=M$, $a\in\mathbb Z$ and, after
reordering,
\[
(x_1,\ldots,x_N)
=
(\underbrace{D,\ldots,D}_{a},
\underbrace{L,\ldots,L}_{N-a}).
\]
\end{lemma}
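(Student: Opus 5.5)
The plan is a polynomial-majorant (Lagrangian) argument. The point is that $\sum_i x_i^p$ can be controlled through the only information available: the box constraint $0\le x_i\le\min\{m,D\}\le D$, the linear sum $\sum_i x_i=cm$, the quadratic sum $\sum_i x_i^2\le B(m)$, and the count $N$.

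\textbf{Step 1 (the extremal vector is tight).} First I check that at $m=M$ the vector with $a$ entries $D$ and $N-a$ entries $L$ makes both moment constraints tight. Indeed $cM=aD+(N-a)L$. Expanding,
\[
B(M)=cLM+D(cM-NL)=L\bigl(aD+(N-a)L\bigr)+aD^2-aDL=aD^2+(N-a)L^2 .
\]
So the target $aD^p+(N-a)L^p$ is exactly the value of $\sum x_i^p$ at a point satisfying all constraints with equality. This suggests a weak-duality certificate supported on the two values $L$ and $D$.

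\textbf{Step 2 (a quadratic majorant).} Choose a quadratic $q(x)=\lambda x^2+\mu x+\nu$ such that $h(x):=x^p-q(x)$ satisfies $h(L)=h'(L)=0$ and $h(D)=0$. That is, $q$ is tangent to $x^p$ at $L$ and meets it again at $D$; this is three linear conditions, solvable since $L<D$. Since $p\ge2$, we have $(x^p)'''=p(p-1)(p-2)x^{p-3}\ge0$, so $h''$ is nondecreasing on $[0,D]$. A Rolle-type zero count (a double zero at $L$ and a simple zero at $D$, with $h''$ monotone) then gives $h\le0$ on $[0,D]$, i.e.\ $x^p\le q(x)$ there. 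When $p>2$ the inequality $h'''>0$ is strict on $(0,D]$, so $h<0$ on $[0,D]\setminus\{L,D\}$. One also checks $\lambda\ge0$: for $p=2$ we have $\lambda=1$, and in general $\lambda$ is a divided difference of $x^p$, which is nonnegative by convexity of $x^{p-1}$.

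\textbf{Step 3 (summing and reducing to a one-variable inequality).} Summing $x_i^p\le q(x_i)$ and using $\lambda\ge0$ gives
\[
\sum_i x_i^p\le \lambda\sum_i x_i^2+\mu\sum_i x_i+\nu N\le \lambda B(m)+\mu cm+\nu N=:\Psi(m).
\]
At $m=M$, Step 1 together with $q(L)=L^p$ and $q(D)=D^p$ gives $\Psi(M)=aD^p+(N-a)L^p$. It therefore remains to show that $\Psi$ is nondecreasing on $[0,M]$. Now $\Psi$ is a quadratic in $m$ with leading coefficient $\lambda c\xi\ge0$, so it suffices to check $\Psi'(M)\ge0$, or else compare $\Psi(0)$ with $\Psi(M)$ via convexity. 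This amounts to a sign check on $2\lambda c\xi M+\lambda D\eta+\mu c$, which I would verify by substituting the explicit $\lambda,\mu$ from Step 2 and using $L<D\le M$ and $\xi=L/M$.

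\textbf{Main obstacle.} I expect this monotonicity in $m$ to be the main difficulty. If the direct sign check is messy, my fallback is a rescaling argument: replace $x_i$ by a suitable affine stretch that increases $m$ to $M$ while preserving the constraints and not decreasing $\sum x_i^p$.

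\textbf{Equality for $p>2$.} Equality forces each of the following in turn. Since $h<0$ off $\{L,D\}$, every $x_i\in\{L,D\}$. Equality in the quadratic constraint is also needed (when $\lambda>0$), and strict monotonicity of $\Psi$ forces $m=M$. Finally, $\sum_i x_i=cM=aD+(N-a)L$ with $L<D$ forces the number of entries equal to $D$ to be exactly $a$, so $a\in\mathbb Z$ and $(x_1,\ldots,x_N)$ is, up to reordering, the asserted vector.
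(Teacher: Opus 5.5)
The paper does not prove this lemma; it quotes it from Cao--Lu--Zhang. Judged on its own, your Steps 1 and 2 are correct: $q$ is tangent to $x^p$ at $L$, meets it at $D$, satisfies $x^p\le q(x)$ on $[0,D]$, and has leading coefficient $\lambda>0$.

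\textbf{The gap is in Step 3.} The function $\Psi(m)=\lambda B(m)+\mu cm+\nu N$ is in general \emph{not} nondecreasing on $[0,M]$. Since $\Psi$ is convex in $m$, checking $\Psi'(M)\ge 0$ says nothing about the rest of $[0,M]$. What you actually need is $\Psi(0)\le\Psi(M)$, i.e.\ $N\bigl((1-p)L^p+\lambda L^2\bigr)\le aD^p+(N-a)L^p$, and this fails when $a/N$ is small and $p$ is large. For example, take $L=1$, $D=M=2$, $N=100$, $a=1$, $p=10$. Then $c=50.5$, $\xi=\eta=\tfrac12$, $B(u)=25.25u^2+u$ and $q(x)=1013x^2-2016x+1004$. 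So $\Psi(M)=1123$ is the target, but $\Psi(0)=100400$ and $\Psi(1)\approx 25183$. The value $m=1$ is feasible (take $x_i\equiv 0.505$), so your chain $\sum_i x_i^p\le\Psi(m)\le\Psi(M)$ breaks there.

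The same failure occurs for parameters from the paper's application, e.g.\ $d=1$, $n=2k=20$, $p=10$. The root cause is that $q$ is calibrated to the moments at $m=M$ only. For smaller $m$, the data $(cm,B(m))$ are the moments of a different two-point configuration, so a fixed $q$ is far from tight. Your ``affine stretch'' fallback is too unspecified to close this.

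\textbf{The repair is to let the tangency point move with $m$.} Put $\ell=\xi m\in[0,L]$ and $j=j(m)=\eta m/(D-\xi m)$. Using $c=\eta+N\xi$, one checks that $jD+(N-j)\ell=cm$ and $jD^2+(N-j)\ell^2=B(m)$. Let $q_m$ be the quadratic tangent to $x^p$ at $\ell$ and meeting it at $D$. Your Step 2 applies verbatim because $0\le\ell<D$, giving $x^p\le q_m(x)$ on $[0,D]$ with nonnegative leading coefficient. Summing and using the two moment constraints gives
\[
\sum_i x_i^p\le j\,q_m(D)+(N-j)\,q_m(\ell)=F(m):=j(m)D^p+\bigl(N-j(m)\bigr)(\xi m)^p .
\]
Since $\eta=a(D-L)/M>0$, $j(m)$ increases strictly from $0$ to $j(M)=a<N$, while $\xi m$ increases to $L<D$. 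Hence $F'(m)\ge j'(m)\bigl(D^p-(\xi m)^p\bigr)>0$, and $F(M)=aD^p+(N-a)L^p$.

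Your equality analysis then goes through with $F$ in place of $\Psi$. Strict monotonicity of $F$ forces $m=M$. For $p>2$, strict negativity of $h$ off $\{L,D\}$ forces every $x_i\in\{L,D\}$. Finally, $\sum_i x_i=aD+(N-a)L$ forces exactly $a$ entries equal to $D$, so $a\in\mathbb{Z}$. The constraint $x_i\le m$ is never needed.
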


\begin{proof}[{\bf Proof of Theorem~\ref{clusterEKRlp}}]
Set
	\[
	m:=|\A|
	\qquad\text{and}\qquad
	M:=\binom{n-1}{k-1}.
	\]
	Theorem~\ref{dclusterEKR}(a) gives $m\leq M$.
Notice also that
	\[
	n\geq \frac{tk}{t-1}=k+\frac{k}{t-1}
	\geq k+\frac{k}{k-1}>k+1.
	\]
We write
\[
\gamma(u):=
	\frac{\binom kd\binom{k-1}d}{\binom{n-1}d}u^2
	+\binom{k-1}{d-1}\binom{n-d-1}{k-d}u
	\qquad (u\geq0).
\]

	

For $d=0$, the degree vector has the single entry $m$,
whereas for $d=k$ it is the characteristic vector of $\A$.
Hence
	\[
	\|\v_0(\A)\|_p^p=m^p,
	\qquad
	\|\v_k(\A)\|_p^p=m.
	\]
On the other hand,
	$\Phi_p(n,k,0)=M^p$ and $\Phi_p(n,k,k)=M$.
The upper bound and the equality assertions for $d\in\{0,k\}$ therefore follow directly from Theorem~\ref{dclusterEKR}(a),(b).

Assume that $1\leq d\leq k-1$.
For $U\in\binom{[n]}d$, write
	\[
	x_U:=d_\A(U).
	\]
We apply Lemma~\ref{convextransferlem} to this degree sequence with the parameters
\begin{equation}\label{eq:cluster-transfer-parameters}
N=\binom nd, \quad a =\binom{n-1}{d-1}, \quad D=\binom{n-d}{k-d},	\quad
L =\binom{n-d-1}{k-d-1}, \text{~ and ~}M =\binom{n-1}{k-1}.
	\end{equation}
	These parameters satisfy
	\[
	0<L<D\leq M,
	\qquad
	0<a<N.
	\]
Indeed, $D/L=(n-d)/(k-d)>1$, and the $D$ members of $\binom{[n]}k$ containing a fixed $d$-set form a subfamily of a point-star.
Moreover,
	\[
	0\leq x_U\leq\min\{m,D\}
	\quad\text{for every }U\in\binom{[n]}d,
	\]
and double counting the pairs $(U,A)$ with $U\subseteq A\in\A$ gives
	\begin{equation}\label{eq:cluster-first-moment}
		\sum_{U\in\binom{[n]}d}x_U=\binom kd m.
	\end{equation}
The elementary identities
	\[
	aD+(N-a)L=\binom kd M,
	\qquad
	\frac LM=\frac{\binom{k-1}d}{\binom{n-1}d}
	\]
show that the parameters $c,\xi,\eta$ in
Lemma~\ref{convextransferlem} satisfy
	\begin{equation}\label{eq:cluster-transfer-identities}
		c=\binom kd,
		\qquad
		c\xi=\frac{\binom kd\binom{k-1}d}{\binom{n-1}d},
		\qquad
		D\eta=\binom{k-1}{d-1}\binom{n-d-1}{k-d}.
	\end{equation}
	Taking $\mathcal B=\mathcal A$ in
	Theorem~\ref{innerproductABthm}, and using
	\eqref{eq:cluster-transfer-identities}, we obtain
	\begin{equation}\label{eq:cluster-second-moment}
		\sum_{U\in\binom{[n]}d}x_U^2
		\leq c\xi m^2+D\eta m= B(m).
	\end{equation}
	Thus every hypothesis of Lemma~\ref{convextransferlem} is satisfied,
	and that lemma yields
	\[
	\sum_{U\in\binom{[n]}d}x_U^p
	\leq aD^p+(N-a)L^p
	=\Phi_p(n,k,d),
	\]
which proves (a). 

We next determine the equality cases.
If $p=2$ and equality holds in \eqref{eq:cluster-lp-bound}, Theorem~\ref{innerproductABthm} and $m\leq M$ by Theorem~\ref{dclusterEKR}(a) imply
$\|\v_d(\A)\|_2^2\leq \gamma(M).$
Equality therefore forces $m=M$ and equality in
Theorem~\ref{innerproductABthm}.
If $(t,n)\neq(2,2k)$,
Theorem~\ref{dclusterEKR}(b) gives that $\A$ is a star.  If
	$(t,n)=(2,2k)$, the equality statement in
Theorem~\ref{innerproductABthm}, together with $n\geq k+2$ and
$m=M$,
gives $\A$ is a star or the complement of a star.
Both families are intersecting and attain $\Phi_2(n,k,d)$.
	
	Now assume that $p>2$ and equality holds in
	\eqref{eq:cluster-lp-bound}.
The equality statement in Lemma~\ref{convextransferlem} forces $m=M$ and forces the multiset of $d$-degrees of $\A$ to be
	\begin{equation}\label{eq:cluster-equality-degrees}
		(\underbrace{D,\ldots,D}_{a},
		\underbrace{L,\ldots,L}_{N-a}).
	\end{equation}
In particular, the equality also holds in Theorem~\ref{innerproductABthm}.
Except the case of $(t,n)=(2,2k)$, Theorem~\ref{dclusterEKR}(b) again gives a star.
For $(t,n)=(2,2k)$,
Theorem~\ref{innerproductABthm} implies $\A$ is a star or the complement of a star.
The latter family has $d$-degree zero at every $d$-set containing the omitted point.
Recall that$L>0$,  so its degree multiset cannot be~\eqref{eq:cluster-equality-degrees} and hence that $\A$ is a star.
Conversely, a star is $t$-cluster-free and satisfies the equality.
This proves~(b).

	
It remains to prove stability.
Fix $\delta>0$.
By Theorem~\ref{dclusterEKR}(c), there are a number $\eta\in(0,1)$ and an integer $n_1$ such that, whenever $n>n_1$ and $m\geq(1-\eta)M,$
	there exists an $S\in\binom{[n]}{n-1}$ such that
	\[
	\left|\A\cap\binom Sk\right|<\delta M.
	\]
	It is therefore enough to show that a sufficiently near-extremal
	$\ell_p$-norm forces $m\geq(1-\eta)M$.
	
For $d=0$, the inequality $m^p\geq(1-\epsilon)M^p$ implies
$m\geq(1-\epsilon)^{1/p}M\geq(1-\epsilon)M$.
For $d=k$, the same conclusion follows immediately from
$m\geq(1-\epsilon)M$.
Thus these cases follow by choosing $\epsilon\leq\eta$.
	
Suppose that $1\leq d\leq k-1$.  With the notation in
	\eqref{eq:cluster-transfer-parameters}, define
	\begin{equation}\label{eq:cluster-rho}
		\rho_n:=
		\frac{\Phi_p(n,k,d)}{D^{p-2}\Phi_2(n,k,d)}
		=\frac{\dfrac dn+\left(1-\dfrac dn\right)
			\left(\dfrac{k-d}{n-d}\right)^p}
		{\dfrac dn+\left(1-\dfrac dn\right)
			\left(\dfrac{k-d}{n-d}\right)^2}.
	\end{equation}
For fixed $k,d,p$, we have $\rho_n\to1$ as $n\to\infty$.
When $p=2$, observe that $\rho_n=1$.
Enlarge $n_1$, if necessary, so that
	\[
	\rho_n\geq1-\frac\eta4
	\qquad\text{whenever }n>n_1,
	\]
	and choose $0<\epsilon\leq\eta/4$.  Since
	$0\leq x_U\leq D$ and $p\geq2$, we have
	$x_U^p\leq D^{p-2}x_U^2$.
Consequently, the near-extremal hypothesis and Theorem~\ref{innerproductABthm} imply
$$
		(1-\epsilon)\Phi_p(n,k,d) \leq\sum_{U\in\binom{[n]}d}x_U^p
\leq D^{p-2}\sum_{U\in\binom{[n]}d}x_U^2
		\leq D^{p-2}\gamma(m).
$$
	Since $\gamma(M)=\Phi_2(n,k,d)$, it follows from~\eqref{eq:cluster-rho}
	that
	\[
	\gamma(m)\geq(1-\epsilon)\rho_n \gamma(M)
	\geq\left(1-\frac\eta4\right)^2\gamma(M)
	\geq\left(1-\frac\eta2\right)\gamma(M).
	\]
	On the other hand,
	\[
	\gamma((1-\eta)M)\leq(1-\eta)\gamma(M)
	<\left(1-\frac\eta2\right)\gamma(M).
	\]
	The strict monotonicity of $\gamma$ therefore gives
	$m>(1-\eta)M$.  The stability assertion now follows from
	Theorem~\ref{dclusterEKR}(c).
\end{proof}

\section{Concluding Remarks}\label{ConclusionSec}

In this paper, we present an Erd\H{o}s--Ko--Rado theorem for cross-intersecting families in the Euclidean inner product along with a corresponding stability result.
This inner product version includes both the classical
Erd\H{o}s--Ko--Rado theorem and its cross-intersecting analogue as special cases.
Our proof relies on the cross version of Bey's inequality, that is, Theorem~\ref{innerproductABthm}.
This inequality transfers suitable cardinality bounds into exact
$\ell_p$-norm bounds when the extremal configuration is a star.
For instance, Chv\'atal~\cite{Chvatal} conjectured that the star is the maximum size $k$-uniform family with no $t$-simplex for all $n\geq (t+1)k/t$.
Frankl and F\"redi~~\cite{FranklF} proved it for $n\geq n_0(k,t)$.
The method proving of Theorem~\ref{clusterEKRlp} can be used to prove a similar result for $t$-simplex-free families.

We conclude this paper with an open problem in~\cite{Cao}.
A family  $\A \subseteq \binom{[n]}{k}$ is called {\em $t$-intersecting} if $|S\cap T|\geq t$ for any $S,T\in \A$.
Erd\H{o}s, Ko, and Rado~\cite{Erdos} proved that
there exists an integer $n_0(k, t)$ such that
if $n\geq n_0(k, t)$ and $\A \subseteq \binom{[n]}{k}$ is $t$-intersecting, then
$|\A| \leq \binom{n-t}{k-t}.$
Wilson~\cite{Wilson} showed that the smallest possible such $n_0(k, t)$ is $(t+1)(k-t+1)$.
Inspired by these classical works,
Brooks and Linz~\cite[Conjecture 4.1]{Brooks} conjectured that
if $n\geq (t+1)(k-t+1)$ and
$\mathcal{A} \subseteq \binom{[n]}{k}$ is $t$-intersecting,
then
$$||\mathbf{v}_{k-1}(\A)||^2\leq ||\mathbf{v}_{k-1}\big(\mathcal{S}_T\big)||^2
=\binom{n-t}{k-t}\big( (k-t)(n-k+1)+t\big),$$
where $\mathcal{S}_T$ is a $t$-star, that is,
$\mathcal{S}_T:=
\left\{S\in\binom{[n]}k: T\subseteq S\right\}$ for some $T\in\binom{[n]}{t}$.
Recently, Wu and Zhang~\cite{Wu} confirmed it and gave many related extremal results.
Inspired by these works, Cao, Lu, and Zhang~\cite{Cao} proposed the following conjecture.

\begin{conjecture}[Conjecture 6.1,~\cite{Cao}]\label{Caoconj}
Let $k\geq t\geq 1$, $k-1\geq d\geq 1$, and let $p\geq2$ be real.
If $n\geq(t+1)(k-t+1)$ and $\A\subseteq\binom{[n]}k$ is $t$-intersecting, then
\[
 \|\mathbf{v}_d(\A)\|_p^p \leq \|\mathbf{v}_d(\mathcal{S}_T)\|_p^p,
\]
where $\mathcal{S}_T$ is a $t$-star.
\end{conjecture}

The range in Conjecture~\ref{Caoconj} is the sharp by the classical work of Wilson~\cite{Wilson}.
Cao, Lu, and Zhang~\cite{Cao} proved this conjecture for the cases of $d=k-1$ and $t=1$, respectively.
We refer the reader to their paper~\cite{Cao} for details.


\subsection*{Acknowledgments}
This work was initiated when the first author was a visiting Ph.D. student at the National University of Singapore
from October 2022 to October 2024,
with the support of the China Scholarship Council (CSC grant no. 202206500005).
The first author would like to thank
his visiting adviser Hao Huang for his guidance and valuable suggestions that contributed to the improvement of the paper.
The authors would like to thank Jian Wang for helpful comments on an earlier draft.


\end{document}